\documentclass[a4paper]{article}
\usepackage{geometry}
\usepackage[tbtags]{amsmath}
\usepackage{amsthm}
\usepackage{amssymb}
\usepackage{amscd}
\usepackage{cite}
\usepackage{booktabs}
\usepackage[pdftex,colorlinks]{hyperref}
\usepackage{subfigure}
\usepackage{graphicx}
\usepackage{epstopdf}
\usepackage{epsf}
\usepackage{tikz}
\tikzset{>=stealth}
\usepackage{multicol}
\usepackage{multirow}
\usepackage{bm}
\usepackage{caption}

\numberwithin{equation}{section}

\newcommand{\norm}[1]{\left\lVert #1 \right\rVert}
\newcommand{\Hdiv}{\ensuremath{H(\text{div};\Omega)}}
\newcommand{\Hodiv}{\ensuremath{H_0(\text{div}^0;\Omega)}}
\newcommand{\Hdivo}{\ensuremath{H_0(\text{div};\Omega)}}
\newcommand{\Hcurl}{\ensuremath{H(\text{curl};\Omega)}}

\newtheorem{theorem}{\noindent{\bf Theorem}}[section]

\newtheorem{lemma}{\noindent{Lemma}}[section]

\theoremstyle{definition}
\newtheorem{remark}{\noindent{Remark}}[section]

\begin{document}
	
	\large 
	\title{{\large  \textbf{A Decoupled Low-Order Conforming Mixed Finite Element Method for a Three-Dimensional Fourth-Order Singularly Perturbed Problem}}}
	\author{\small{Yuanchun Tang}\thanks{School of Mathematics and Statistics, Yunnan University, Kunming, {\rm 650500}, China, \tt 12024113093@stu.ynu.edu.cn}   \and   \small{Baiju Zhang}\thanks{Corresponding author. School of Mathematics and Statistics, Yunnan University, Kunming, {\rm 650500}, China, \tt 20220151@ynu.edu.cn} \and \small{Zhimin Zhang}\thanks{ Department of Mathematics, Wayne State University, Detroit, MI 48202, USA, \tt ag7761@wayne.edu}}
	\date{}\maketitle 
	
	\begin{abstract}
		This paper develops a decoupled low-order conforming finite element method for a fourth-order elliptic singular perturbation problem in three dimensions. By means of a generalized Helmholtz decomposition, the problem is reduced to two second-order elliptic problems and a system of generalized singularly perturbed Stokes-type equations subject to a curl-free constraint. The former are discretized by standard linear finite elements. For the latter, we employ the MINI element and show that, after adding an  $L^2$ term involving a Lagrange multiplier, the resulting discretization becomes robust with respect to the perturbation parameter. We further establish an error estimate of order $h^{1/2}$ uniform with respect to the perturbation parameter. Numerical experiments are included to support the theory.
	\end{abstract}
	
	\begin{keywords}
		Fourth-order singularly perturbed equation; Helmholtz decomposition; Generalized singularly perturbed Stokes-type equation
	\end{keywords}
	
	\section{Introduction}
	Let $\Omega\subset \mathbb{R}^3$ denote a bounded domain, and its boundary be denoted as $\partial \Omega$. This paper studies a low-order decoupled conforming mixed finite element method for a fourth-order singular perturbation problem:
\begin{equation}\label{equation1}
\begin{cases}
	\varepsilon^2 \Delta^2 u-\Delta u=f & \text{in } \Omega, \\
	u=\partial_n u=0 & \text{on } \partial \Omega.
\end{cases} 
\end{equation}
 	 Here, $f\in L^2(\Omega)$ is a given function, $\Delta^2$ represents the biharmonic operator, $\partial_n$ denotes the normal derivative at the boundary and $\varepsilon >0$ is a real parameter. 

Problem \eqref{equation1} was considered in the early work \cite{semper1992conforming} using $H^2$-conforming finite elements; see \cite{Chen2025ImplementationAB, chen2021geometricdecompositionssimpliciallattice, MR4835148, MR2474112}  for further developments of such elements. However, due to the complexity in implementing such conforming elements, various $H^2$-nonconforming elements are more commonly used in practice \cite{MR3039784, MR3267353, MR2204450, MR2431184, MR2359954}. In addition, the $C^0$ interior penalty discontinuous Galerkin method \cite{MR2431184}, which is based on Lagrange element spaces, has also been adopted in some studies. Most of the aforementioned methods are designed for the primal formulation of problem \eqref{equation1} \cite{MR3039784, MR3267353, MR2204450, MR2359954}. 

In recent years, there has been growing interest in finite element methods based on the Helmholtz decomposition for fourth-order singularly perturbed elliptic problems. Huang et al. used the Morley-Wang-Xu element with a modified right-hand side to overcome the divergence phenomenon as $\varepsilon \to 0$, and introduced Nitsche's method to achieve optimal second-order convergence in the energy norm in the presence of boundary layers \cite{MR4252891}. Chen et al. developed a decoupling framework via generalized Helmholtz decomposition, reducing the problem to two Poisson equations and a Brinkman-type equation \cite{MR3852718}. More recently, Cui et al. constructed a low-order non-conforming smooth de Rham complex with a N\'ed\'elec interpolation operator, attaining optimal uniform convergence without Nitsche's method  \cite{cui2025loworderfiniteelementcomplex}. 

A common feature of these methods is that they first apply the Helmholtz decomposition to \eqref{equation1} to reformulate the original problem into a decomposed system, and then design finite element schemes for the resulting formulation. Specifically, \eqref{equation1} can be rewritten as follows \cite{cui2025loworderfiniteelementcomplex}:
\begin{subequations}\label{equation2}
\begin{equation}\label{equation2.1}
	(\nabla w, \nabla v) = (f, v) \quad \forall v \in H_0^1(\Omega), 
	\end{equation}
	\begin{equation}\label{equation2.2}
	\varepsilon^2(\nabla \boldsymbol{\phi}, \nabla \boldsymbol{\psi}) + (\boldsymbol{\phi}, \boldsymbol{\psi}) + (\operatorname{curl} \boldsymbol{\psi}, \boldsymbol{p}) = (\nabla w, \boldsymbol{\psi}) \quad \forall \boldsymbol{\psi} \in H_0^1(\Omega; \mathbb{R}^3),  
	\end{equation}
	\begin{equation}\label{equation2.3}
	(\operatorname{curl} \boldsymbol{\phi}, \boldsymbol{q}) = 0 \quad \forall \boldsymbol{q} \in L^2(\Omega; \mathbb{R}^3) / \nabla H^1(\Omega),  
	\end{equation}
	\begin{equation}\label{equation2.4}
	(\nabla u, \nabla \xi) = (\boldsymbol{\phi}, \nabla \xi) \quad \forall \xi \in H_0^1(\Omega). 
\end{equation}
\end{subequations}
The above system avoids the need to construct $H^2$-conforming finite element spaces and can be solved sequentially. This technique of decoupling high-order partial differential equations into several low-order ones based on the Helmholtz decomposition was successfully applied to the Reissner-Mindlin plate model as early as the 1980s \cite{MR842127}. Nowadays, this technique has been widely extended to the research and computation of various high-order partial differential equations, such as fourth-order elliptic equations, fourth-order curl equations, sixth-order elliptic equations, and transmission eigenvalue problems \cite{MR3533246, MR3667017, MR3852718, MR3745016, MR3808156, MR4345353, MR4638728, MR4867672, cui2025loworderfiniteelementcomplex, MR4890863, gu2025nonconforminglinearelementmethod}. Note that \eqref{equation2.2} and \eqref{equation2.3} are analogous to the corresponding equations in the Brinkman problem. The presence of small parameters in \eqref{equation2.2} and \eqref{equation2.3} suggests that ideas used to design robust numerical schemes for the Brinkman problem may also be applicable here. In fluid dynamics, an important approach to constructing robust numerical schemes is to design finite element spaces that preserve the divergence-free constraint \cite{MR2283095,xie2008uniformly,guzman2012afamily}. 
By analogy, the corresponding issue in the present setting is to preserve the curl-free constraint.
This is precisely the approach adopted in \cite{cui2025loworderfiniteelementcomplex}. 

Despite the appeal of such an approach, the exact preservation of the divergence-free or curl-free constraint at the discrete level generally requires carefully designed finite element spaces. This motivates us to seek an alternative approach that is simpler to implement while still retaining the desired stability and robustness. This consideration brings to mind the use of the MINI element in \cite{MR2672618}. In two dimensions, extending this idea to Problem \eqref{equation1} is rather straightforward, since the divergence and curl operators differ only by a $\pi/2$ rotation. In three dimensions, however, this relation no longer holds. This raises the question of whether the same idea can be extended to the Brinkman-type equations subject to a curl-free constraint considered here. To illustrate how the aforementioned difficulty can be overcome, we first rewrite \eqref{equation2.2} and \eqref{equation2.3} as follows:
\begin{equation}\label{equation3} 
	\left\{ \begin{aligned} -\varepsilon^2 \Delta \boldsymbol{\phi} + \boldsymbol{\phi} +\operatorname{curl} \boldsymbol{p} &= \nabla w ,\\ \nabla \cdot \boldsymbol{p} &= 0 ,\\ \operatorname{curl} \boldsymbol{\phi}- \nabla r &= \boldsymbol{0}. \end{aligned} \right. 
	\end{equation}
	where \(0 \leq \varepsilon \leq C\).  For \(\varepsilon > 0\), assumed that boundary conditions are satisfied $\boldsymbol{\phi} |_{\partial\Omega} = 0 $ \hspace{3pt}and\hspace{3pt}	$\boldsymbol{p} \cdot \boldsymbol{n}|_{\partial\Omega} = 0.$
	For\hspace{3pt}$\varepsilon=0$\hspace{3pt}, it has another boundary conditions\hspace{3pt}$\boldsymbol{\phi} \times \boldsymbol{n} |_{\partial\Omega} = 0 \hspace{3pt}\text{and}\hspace{3pt} \boldsymbol{p}\cdot \boldsymbol{n}|_{\partial\Omega} = 0.$
As shown in \cite{cui2025loworderfiniteelementcomplex}, the Lagrange multiplier $r$ in the above system is identically zero. At this point, the generalized Stokes equation system can be written as:
 \begin{equation}\label{equation4}
 \left\{ \begin{array}{c c c c} -\varepsilon^2 \Delta \boldsymbol{\phi} + \boldsymbol{\phi} & & +\operatorname{curl} \, \boldsymbol{p} & = \nabla w ,\\ & r & +\nabla \cdot \boldsymbol{p} & = 0 ,\\ \operatorname{curl} \boldsymbol{\phi} & - \nabla r & & = \boldsymbol{0}. \end{array} \right.
 \end{equation}
The resulting formulation admits a MINI element discretization for which we establish the corresponding discrete inf-sup condition and derive error estimates that are robust with respect to the perturbation parameter $\varepsilon$. Moreover, the resulting method admits a simpler implementation.

The rest of this paper is organized as follows. In Section \ref{sec:2}, we introduce the notation, analyze the decoupled generalized Stokes-type system, and prove that, upon using the relation $r=0$, its variational formulation is equivalent to the original one. In Section \ref{sec:3}, we presents the MINI element method for the three-dimensional fourth-order singular perturbation problem, including the finite element space and interpolation, as well as error estimation. Finally, in Section \ref{sec:4}, we report numerical results that confirm the theoretical analysis.

\section{A modified decoupled formulation for the fourth-order singularly perturbation problem }\label{sec:2}
In this section, we apply the analysis method for the Brinkman problem \cite{MR2672618} to the generalized Stokes problem and thereby propose a decoupled finite element method for the fourth-order singularly perturbed problem.

\subsection{Notation}
	Let $\Omega \subset \mathbb{R}^3$ be a bounded polyhedron. Given an integer $m \geq 0$ and a bounded region $K \subset \mathbb{R}^3$, we define $H^m(K)$ as the standard Sobolev space of functions on $K$. The corresponding norm and semi-norm are denoted as $\|\cdot\|_{m,K}$ and  $ |\cdot|_{m,K}$ respectively. Let $L^2(M) = H^0(K)$. For an integer $k \geq 0$, let $P_k(K)$ represent the space consisting of all polynomials on $M$ with total degree not exceeding $k$. For $k<0$, define $P_k(K) = {0}$. Let $L^2_0(K)$ be the space of functions in $L^2(K)$ with zero integral average. For a space $B(K)$ defined on $K$, denote its vector version as $B(K;\mathbb{R}^3) := B(K) \otimes \mathbb{R}^3$. We use $(\cdot,\cdot)_K$ to represent the usual inner product in $L^2(K)$ or $L^2(K;\mathbb{R}^3)$. Let $H_0^m(K)$ (or $H_0^m(K;\mathbb{R}^3)$) be the closure of $C^\infty_0(K)$ (or $C^\infty_0(K;\mathbb{R}^3)$) under the norm $\|\cdot\|_{m,K}$. When $K$ is exactly $\Omega$, we will simply denote $\|\cdot\|_{m,K}$, $|\cdot|_{m,K}$ and $(\cdot,\cdot)_K $as $\|\cdot\|_m$, $|\cdot|_m $and $(\cdot,\cdot)_m$, respectively.
	
	We denote the gradient operator, curl operator and divergence operator as $\nabla$ ,curl and div, respectively. Introduce Sobolev spaces
    \begin{align*} H(\text{div}; K) &:= \{ \boldsymbol{v} \in L^2(K; \mathbb{R}^3) : \operatorname{div} \boldsymbol{v} \in L^2(K;\mathbb{R}^3) \}, \\
	 H_0(\text{div}^0; K) &:= \{ \boldsymbol{v} \in H(\text{div}; K) : \boldsymbol{v} \cdot \boldsymbol{n} = 0 \text{ on } \partial K  \text{and}  \operatorname{div} \boldsymbol{v}=0 \}, \\
	 H(\text{curl}; K) &:= \{ \boldsymbol{v} \in L^2(K; \mathbb{R}^3) : \operatorname{curl} \boldsymbol{v} \in L^2(K; \mathbb{R}^3) \}, \\
	 H_0(\text{curl}; K) &:= \{ \boldsymbol{v} \in H(\text{curl}; K) : \boldsymbol{v} \times \boldsymbol{n} = 0 \text{ on } \partial K \}. 
	\end{align*}
  Let $H(\operatorname{div},\operatorname{curl};\Omega)=H_0(\operatorname{div};\Omega)\cap H(\operatorname{curl};\Omega)$, by \cite[Lemma 3.6]{Vivette1986finite} we can equip this space with the following norm \[\|\boldsymbol{q}\|_{H(\operatorname{div},\operatorname{curl};\Omega)}^2=\|\operatorname{div}\boldsymbol{q}\|_0^2+\|\operatorname{curl}\boldsymbol{q}\|_0^2.\]
	
    In this paper, we use  ``$\lesssim \cdots$'' to mean that `` $\leq C \cdots $'', where  $C$  is a generic positive constant independent of  $h$ , which may take diﬀerent values in diﬀerent contexts. Moreover, $A\eqsim B$ means that $A \geq B$ and $A \leq B$.
	
	\subsection{The modified decoupled formulation of the generalized Stokes problem}
	Next, we consider the variational form of \eqref{equation3}. Inspired by \cite{MR2672618} , we consider the following spaces and norms.
	
   The natural energy norm of $\boldsymbol{\phi}$ is 
	\begin{equation}\label{Stokes1}
	\|\boldsymbol{\phi}\|_\varepsilon^2=\varepsilon^2 \|\nabla \boldsymbol{\phi}\|_0^2 + \|\boldsymbol{\phi}\|_0^2,
	\end{equation}
	and the natural solution space is $V$, the completion of $C_0^\infty(\Omega;\mathbb R^3)$\hspace{3pt}with respect to this norm.
	
	For $\varepsilon>0$,
	\begin{equation}\label{Stoeks2}
	V = H_0^1(\Omega;\mathbb R^3),\
	\end{equation}
	but the equivalence is not uniform, for\hspace{3pt}$0 < \varepsilon \leq C$ \hspace{3pt}it holds
	\begin{equation}\label{Stokes3}
	C_1 \varepsilon \|\boldsymbol{\psi}\|_1 \leq \|\boldsymbol{\psi}\|_\varepsilon \leq C_2 \|\boldsymbol{\psi}\|_1, \quad \forall \boldsymbol{\psi} \in V.
	\end{equation}
	For $\varepsilon=0$ the space is
	\begin{equation}\label{Stokes4}
	V = L^2(\Omega;\mathbb R^3).
	\end{equation}
	The space for the Lagrange multiplier $r$ is \begin{equation}
	 R = L_0^2(\Omega).
	\end{equation}
	For $(\boldsymbol{\psi},s) \in V \times R$, we can define
	\begin{equation}\label{Stokes6}
	\|(\boldsymbol{\psi},s)\|_\varepsilon^2 = \|\boldsymbol{\psi}\|_\varepsilon^2 + \|s\|_0^2.
	\end{equation}
	The space for $\boldsymbol{q}$ is defined through the norm, denoted by $Q$:
	\begin{equation}\label{Stokes7}
	\|\boldsymbol{q}\|_\varepsilon^2 = \left( \sup_{\boldsymbol{\psi} \in V} \frac{\langle \boldsymbol{\psi}, \operatorname{curl} \boldsymbol{q} \rangle}{\|\boldsymbol{\psi}\|_\varepsilon} \right)^2 + \|\operatorname{div} \boldsymbol{q}\|_0^2,
	\end{equation}
	where $\langle \cdot, \cdot \rangle$ denotes the duality pairing in $V \times V^*$.
	
	We set
	\begin{equation}\label{Stokes8}
	Q = \{ \boldsymbol{q} \in H_0(\operatorname{div};\Omega) \mid \|\boldsymbol{q}\|_\varepsilon < \infty \}.
	\end{equation}
	Note that for $(\boldsymbol{\psi},\boldsymbol{q}) \in V \times Q$ it holds
	\begin{equation}\label{Stokes9}
	\langle \boldsymbol{\psi}, \operatorname{curl} \boldsymbol{q} \rangle =
	\begin{cases}
		(\operatorname{curl} \boldsymbol{\psi}, \boldsymbol{q}), & \varepsilon>0, \\
		(\boldsymbol{\psi}, \operatorname{curl} \boldsymbol{q}), & \varepsilon=0.
	\end{cases} 
	\end{equation}
	For $\varepsilon>0$, we need a Lemma to verify the Babu\v{s}ka-Brezzi condition.
	
	\begin{lemma}\label{lemmain}
		 There is a constant\hspace{3pt}$\beta>0$\hspace{3pt}, such that the following inf-sup condition holds for\hspace{3pt}$\varepsilon>0$\hspace{3pt}:
	\begin{equation}\label{inf-sup1}
	\sup_{\boldsymbol{0} \neq(\boldsymbol{\psi},s)\in V\times R} 
	\frac{(\operatorname{curl}\boldsymbol{\psi},\boldsymbol{q}) + (s,\operatorname{div}\boldsymbol{q})}{\|({\boldsymbol{\psi}},s)\|_\varepsilon}
	\geq \beta \norm{{\boldsymbol{q}}}_{H(\operatorname{div};\Omega)},
	\quad \forall {\boldsymbol{q}}\in{Q}.
	\end{equation}
\end{lemma}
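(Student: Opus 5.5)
The plan is to build an explicit test pair $(\boldsymbol{\psi},s)$ from a Helmholtz decomposition of $\boldsymbol{q}$: the divergence part of the norm is handled by $s$, and the solenoidal part by $\boldsymbol{\psi}$ through a regular vector potential. Throughout I take $\|\boldsymbol{q}\|_{H(\operatorname{div};\Omega)}^2=\|\boldsymbol{q}\|_0^2+\|\operatorname{div}\boldsymbol{q}\|_0^2$. I also assume that $\Omega$ is simply connected with connected boundary, so that no harmonic fields occur. For a general topology one would have to add the finite-dimensional space of harmonic fields to $R$, or quotient them out, and this assumption is where the argument is most delicate.

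\emph{Step 1 (decomposition).} Fix $\boldsymbol{q}\in Q\subset H_0(\operatorname{div};\Omega)$ and let $\rho\in H^1(\Omega)\cap L^2_0(\Omega)$ solve the Neumann problem $(\nabla\rho,\nabla\mu)=(\boldsymbol{q},\nabla\mu)$ for all $\mu\in H^1(\Omega)$. Set $\boldsymbol{z}=\boldsymbol{q}-\nabla\rho$. By construction $\operatorname{div}\boldsymbol{z}=0$ and $\boldsymbol{z}\cdot\boldsymbol{n}=0$, so $\boldsymbol{z}\in H_0(\operatorname{div}^0;\Omega)$, and $(\boldsymbol{z},\nabla\mu)=0$ for every $\mu\in H^1(\Omega)$. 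Since $\operatorname{div}\nabla\rho=\operatorname{div}\boldsymbol{q}$ with homogeneous Neumann data and $\rho$ has mean zero, the Poincaré inequality gives $\|\nabla\rho\|_0\lesssim\|\operatorname{div}\boldsymbol{q}\|_0$. Consequently
\[
\|\boldsymbol{q}\|_{H(\operatorname{div};\Omega)}^2\lesssim\|\boldsymbol{z}\|_0^2+\|\operatorname{div}\boldsymbol{q}\|_0^2 .
\]

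\emph{Step 2 (regular potential; the main obstacle).} I need $\boldsymbol{\psi}\in H_0^1(\Omega;\mathbb{R}^3)$ with $\operatorname{curl}\boldsymbol{\psi}=\boldsymbol{z}$ and $\|\boldsymbol{\psi}\|_1\lesssim\|\boldsymbol{z}\|_0$. This is the known surjectivity of $\operatorname{curl}:H_0^1(\Omega;\mathbb{R}^3)\to H_0(\operatorname{div}^0;\Omega)$ on a Lipschitz polyhedron with trivial topology, with a bounded right inverse. I would cite Girault--Raviart \cite{Vivette1986finite}, or the Bogovskii-type regularized potentials of Costabel--McIntosh. This is the only nontrivial analytic input, and it is exactly where the topological assumption enters. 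Given $\boldsymbol{\psi}$, the bound \eqref{Stokes3} yields $\|\boldsymbol{\psi}\|_\varepsilon\le C_2\|\boldsymbol{\psi}\|_1\lesssim\|\boldsymbol{z}\|_0$ uniformly in $\varepsilon\in(0,C]$.

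\emph{Step 3 (test pair and conclusion).} Take $s=\operatorname{div}\boldsymbol{q}$. This lies in $R=L_0^2(\Omega)$ because $\int_\Omega\operatorname{div}\boldsymbol{q}=\int_{\partial\Omega}\boldsymbol{q}\cdot\boldsymbol{n}=0$. Using the orthogonality $(\boldsymbol{z},\nabla\rho)=0$ from Step 1, the numerator becomes
\[
(\operatorname{curl}\boldsymbol{\psi},\boldsymbol{q})+(s,\operatorname{div}\boldsymbol{q})=(\boldsymbol{z},\nabla\rho+\boldsymbol{z})+\|\operatorname{div}\boldsymbol{q}\|_0^2=\|\boldsymbol{z}\|_0^2+\|\operatorname{div}\boldsymbol{q}\|_0^2 .
\]
By Step 2, the denominator satisfies $\|(\boldsymbol{\psi},s)\|_\varepsilon\lesssim(\|\boldsymbol{z}\|_0^2+\|\operatorname{div}\boldsymbol{q}\|_0^2)^{1/2}$. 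Hence the quotient is bounded below by a constant times $(\|\boldsymbol{z}\|_0^2+\|\operatorname{div}\boldsymbol{q}\|_0^2)^{1/2}$. Combining this with the estimate at the end of Step 1 gives \eqref{inf-sup1}, with $\beta$ depending only on $\Omega$, $C_2$ and the potential constant, and in particular independent of $\varepsilon$.

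If $\boldsymbol{q}=\boldsymbol{0}$ there is nothing to prove. Otherwise the chosen pair is nonzero, because $\boldsymbol{z}$ and $\operatorname{div}\boldsymbol{q}$ cannot both vanish when $\boldsymbol{q}\ne\boldsymbol{0}$.
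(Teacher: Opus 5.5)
Your proposal is correct and follows the paper's route: a Helmholtz splitting $\boldsymbol{q}=\nabla\rho+\hat{\boldsymbol{q}}$, an $H_0^1$ vector potential for the solenoidal part, and $s$ a multiple of $\operatorname{div}\boldsymbol{q}$. The differences are minor: the paper takes $s=2\operatorname{div}\boldsymbol{q}$ and bounds $\|\nabla\rho\|_0$ through the $H_0(\operatorname{div})\cap H(\operatorname{curl})$ norm equivalence, whereas you use orthogonality together with a direct Neumann/Poincar\'e estimate, and your explicit simple-connectivity assumption is one the paper only uses tacitly through the results it cites.
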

	\begin{proof}
		 According to \cite[Lemma\ 3.4]{Vivette1986finite},$\quad\forall \boldsymbol{q}\in Q\subset L^2(\Omega;\mathbb{R}^3)$\hspace{3pt}, $\exists $\hspace{3pt}$\rho\in (H^1(\Omega)/\mathbb{R})$\hspace{3pt}and\hspace{3pt}$ \hat{\boldsymbol{q}}\in\Hodiv$\hspace{3pt}such that
	\[
	\boldsymbol{q} = \nabla \rho + \hat{\boldsymbol{q}}.
	\]
	Since $\hat{{\boldsymbol{q}}}\in\Hodiv$, by \cite[proposition\ 13]{MR3667017}, for any $\hat{\boldsymbol{q}}\in Q$ there exists $\hat{\boldsymbol{\psi}}\in V$ with $ \operatorname{curl} \hat{\boldsymbol{\psi}}=\hat{\boldsymbol{q}}$  and  $\|\hat{\boldsymbol{\psi}}\|_1\leq C\|\hat{\boldsymbol{q}}\|_{H(\operatorname{div};\Omega)}$.  Due to  $\boldsymbol{q} \in \Hdivo$, we have
	\[
	\nabla \rho \in \Hdivo \cap \Hcurl.
	\]
	Then by \cite[Lemma\ 3.4]{Vivette1986finite}, we obtain
	\[
	\norm{\nabla \rho} \leq C \norm{\operatorname{div} \boldsymbol{q}}.	\]
	Setting $s = 2\operatorname{div} \boldsymbol{q}$, we can get
\begin{align*}
	(\operatorname{curl} \hat {\boldsymbol{\psi}}, \boldsymbol{q}) + (s, \operatorname{div} \boldsymbol{q}) & \geq \|\hat{\boldsymbol{q}}\|_0^2 + 2\|\operatorname{div} \boldsymbol{q}\|_0^2, \\ 
	& \geq \|\hat{\boldsymbol{q}}\|_0^2 + C^{-1}\|\nabla \rho\|_0^2 + \|\operatorname{div} \boldsymbol{q}\|_0^2, \\
	& \geq \beta \|\boldsymbol{q}\|_{\Hdiv}^2.
\end{align*}
	Therefore, the desired result follows from the above inequality, $\|\hat{\boldsymbol{\psi}}\|_1 \leq C_1\|\hat{\boldsymbol{q}}\|_{\Hdiv} \leq C\|\boldsymbol{q}\|_{\Hdiv}$ and $\|s\|_0 \leq 2\|\boldsymbol{q}\|_{\Hdiv}$.  
	\end{proof}  
	
	Lemma \ref{lemmain} implies that $Q = H_0(\operatorname{div};\Omega)$ for $\varepsilon>0$.
	
	\begin{lemma} \label{lemmaimp}
		There is a constant $\beta>0$, such that the following inf-sup condition holds for $\varepsilon=0$:
	\begin{equation}\label{stokes2}
	\sup_{\boldsymbol{0} \neq(\boldsymbol{\psi},s)\in{V}\times\mathbb{R}}
	\frac{(\boldsymbol{\psi},\operatorname{curl} \boldsymbol{q}) + (s,\operatorname{div} \boldsymbol{q})}{\norm{( \boldsymbol{\psi},s)}_\varepsilon}
	\geq \beta \norm{\boldsymbol{q}}_{H(\operatorname{curl},\operatorname{div};\Omega)},
	\quad \forall \boldsymbol{q}\in H(\operatorname{curl};\Omega)\cap H_0(\operatorname{div};\Omega).
	\end{equation}
\end{lemma}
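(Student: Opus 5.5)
For $\varepsilon=0$ we have $V=L^2(\Omega;\mathbb R^3)$ with $\|\boldsymbol{\psi}\|_\varepsilon=\|\boldsymbol{\psi}\|_0$. By \eqref{Stokes9}, for $\boldsymbol{q}\in H(\operatorname{curl};\Omega)\cap H_0(\operatorname{div};\Omega)$ the pairing is the plain $L^2$ product $(\boldsymbol{\psi},\operatorname{curl}\boldsymbol{q})$. I read the multiplier space in \eqref{stokes2} as $R=L^2_0(\Omega)$, as in Lemma \ref{lemmain}. This causes no loss, since for $\boldsymbol{q}\in H_0(\operatorname{div};\Omega)$ the divergence theorem gives $\int_\Omega\operatorname{div}\boldsymbol{q}=\int_{\partial\Omega}\boldsymbol{q}\cdot\boldsymbol{n}=0$, so $\operatorname{div}\boldsymbol{q}\in L^2_0(\Omega)$.

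The proof is then a direct choice of test pair, simpler than in Lemma \ref{lemmain} because no Helmholtz splitting or curl-lifting is needed. Given $\boldsymbol{q}$, choose
\[
\boldsymbol{\psi}=\operatorname{curl}\boldsymbol{q}\in L^2(\Omega;\mathbb R^3)=V,\qquad s=\operatorname{div}\boldsymbol{q}\in L^2_0(\Omega).
\]
With this choice the numerator is
\[
(\boldsymbol{\psi},\operatorname{curl}\boldsymbol{q})+(s,\operatorname{div}\boldsymbol{q})=\|\operatorname{curl}\boldsymbol{q}\|_0^2+\|\operatorname{div}\boldsymbol{q}\|_0^2 .
\]
The denominator is
\[
\|(\boldsymbol{\psi},s)\|_\varepsilon=\big(\|\operatorname{curl}\boldsymbol{q}\|_0^2+\|\operatorname{div}\boldsymbol{q}\|_0^2\big)^{1/2}.
\]
Hence the quotient equals $\big(\|\operatorname{curl}\boldsymbol{q}\|_0^2+\|\operatorname{div}\boldsymbol{q}\|_0^2\big)^{1/2}$, which is exactly $\|\boldsymbol{q}\|_{H(\operatorname{div},\operatorname{curl};\Omega)}$ as defined in the notation subsection. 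This gives \eqref{stokes2} with $\beta=1$, for every $\boldsymbol{q}$ with $(\operatorname{curl}\boldsymbol{q},\operatorname{div}\boldsymbol{q})\neq\boldsymbol{0}$.

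The only point needing care is that $\|\cdot\|_{H(\operatorname{div},\operatorname{curl};\Omega)}$ is a genuine norm on $H(\operatorname{curl};\Omega)\cap H_0(\operatorname{div};\Omega)$. This is the Friedrichs-type inequality $\|\boldsymbol{q}\|_0\lesssim\|\operatorname{div}\boldsymbol{q}\|_0+\|\operatorname{curl}\boldsymbol{q}\|_0$ from \cite[Lemma 3.6]{Vivette1986finite}, which the paper already assumes when introducing this norm. It requires $\Omega$ to have trivial topology, since otherwise the space of harmonic fields with vanishing normal trace is nontrivial. Under that assumption, $\operatorname{curl}\boldsymbol{q}=\boldsymbol{0}$ and $\operatorname{div}\boldsymbol{q}=0$ force $\boldsymbol{q}=\boldsymbol{0}$, so the degenerate case is excluded. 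If a bound in the full norm $\|\boldsymbol{q}\|_0+\|\operatorname{div}\boldsymbol{q}\|_0+\|\operatorname{curl}\boldsymbol{q}\|_0$ is wanted, the same inequality converts the estimate, at the cost of $\beta$ depending on the Friedrichs constant.
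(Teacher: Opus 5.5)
Your proposal is correct and is essentially the paper's own argument: test with $\boldsymbol{\psi}=\operatorname{curl}\boldsymbol{q}$ and $s=\operatorname{div}\boldsymbol{q}$, so the quotient equals $\bigl(\|\operatorname{curl}\boldsymbol{q}\|_0^2+\|\operatorname{div}\boldsymbol{q}\|_0^2\bigr)^{1/2}$, and rely on \cite[Lemma 3.6]{Vivette1986finite} to make this a norm. You read the multiplier space as $R=L^2_0(\Omega)$ rather than literally as constants; for constants $(s,\operatorname{div}\boldsymbol{q})=0$ and the bound would fail, and the paper's proof implicitly uses the same reading when it takes $s=\operatorname{div}\boldsymbol{q}$.
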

	\begin{proof} 
		Clearly, for all $\boldsymbol{q}\in \Hdivo \cap \Hcurl$, we have
	\[
	\operatorname{curl} \boldsymbol{q}  \in L^2(\Omega;\mathbb R^3) \hspace{3pt}\text{and}\hspace{3pt}\quad \operatorname{div} \boldsymbol{q}  \in L^2(\Omega).
	\]
	Taking $ \boldsymbol{\psi}= \operatorname{curl} \boldsymbol{q}$ and $s = \operatorname{div} \boldsymbol{q}$ gives
	\[
	(\boldsymbol{\psi} ,\operatorname{curl} \boldsymbol{q}) + (s,\operatorname{div} \boldsymbol{q}) = \|\operatorname{curl} \boldsymbol{q}\|_0^2 + \|\operatorname{div} \boldsymbol{q}\|_0^2.
	\]
	Then the inequality follows from \cite[Lemma 3.6]{Vivette1986finite}.  \end{proof}
	
	Meanwhile, Lemma \ref{lemmaimp} implies that for $\varepsilon=0$ we have
	\[
	\|\boldsymbol{q}\|_\varepsilon = \norm{\boldsymbol{q}}_{H(\text{curl},\text{div};\Omega)} 
	\]
	and $Q= \Hdivo \cap \Hcurl$.
	
	Define the bilinear forms
	\begin{subequations}\label{stokes3}
	\begin{align}
		a_1(\boldsymbol{\phi};\boldsymbol{\psi}) &= \varepsilon^2 (\nabla\boldsymbol{\phi},\nabla\boldsymbol{\psi}) + (\boldsymbol{\phi},\boldsymbol{\psi}), \\
		a_2(r,s) &= (r,s), \\
		b_1(\boldsymbol{\phi},q) &= \langle \boldsymbol{\phi}, \nabla\times \boldsymbol{q} \rangle, \\
		b_2(r,\boldsymbol{q}) &= (r,\nabla\cdot \boldsymbol{q}),
	\end{align}
	\end{subequations}
	and
\[
B_1(\boldsymbol{\phi}, r, \boldsymbol{p}; \boldsymbol{\psi}, s, \boldsymbol{q})= a_1(\boldsymbol{\phi}, \boldsymbol{\psi}) + b_1(\boldsymbol{\phi}, \boldsymbol{q}) + b_2(\boldsymbol{\psi}, \boldsymbol{p}) + b_1(r, \boldsymbol{q}) + b_2(s, \boldsymbol{p}).
	\]
	Then the variational problem of \eqref{equation3} reads: find $u,w \in H_0^1(\Omega)$, $\boldsymbol{\phi} \in V$, $\boldsymbol{p}\in Q$, $r\in R$ such that
	\begin{subequations}\label{stokes4}
	 \begin{equation}\label{stokes4.1}
	(\nabla w,\nabla v)=(f,v), 
	\end{equation}
	\begin{equation}\label{stokes4.2}
B_1(\boldsymbol{\phi}, r, \boldsymbol{p}; \boldsymbol{\psi}, s, \boldsymbol{q})=\mathcal{L}(\boldsymbol{\psi},s,\boldsymbol{q}), 
\end{equation}
\begin{equation}\label{stokes4.3}
	(\nabla u,\nabla \xi)=(\boldsymbol{\phi},\nabla \xi),
	\end{equation}
\end{subequations}
where $\mathcal{L}(\boldsymbol{\psi},s,\boldsymbol{q})=(\nabla w, \boldsymbol{\psi})$, for all $v, \xi \in H_0^1(\Omega)$, $\boldsymbol{\psi} \in V$, $\boldsymbol{q} \in Q$, $s \in R$.	Based on the spaces and norms defined earlier, we can establish the corresponding inf-sup condition and well-posedness. To this end, we need to utilize the following Lemma \ref{lemmazero}.
	
	\begin{lemma}\label{lemmazero}
		 For $\varepsilon \ge 0$, if $(\boldsymbol{\psi}, s) \in W= \left\{ (\boldsymbol{\psi}, s) \in V \times R \mid b_1(\boldsymbol{\psi}, \boldsymbol{q}) + b_2(s, \boldsymbol{q}) = 0,\ \forall \boldsymbol{q} \in Q \right\}$, then $s = 0$.\end{lemma}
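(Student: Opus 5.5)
The plan is to choose test functions $\boldsymbol{q}\in Q$ that isolate the term $b_2(s,\boldsymbol{q})=(s,\operatorname{div}\boldsymbol{q})$ by making $b_1(\boldsymbol{\psi},\boldsymbol{q})$ vanish. The natural choice is gradient fields. Since $s\in R=L_0^2(\Omega)$, we will solve the Neumann problem: find $\rho\in H^1(\Omega)/\mathbb{R}$ with
\[
(\nabla\rho,\nabla\eta)=-(s,\eta)\quad \forall \eta\in H^1(\Omega).
\]
This problem is solvable precisely because $s$ has zero mean. We then set $\boldsymbol{q}=\nabla\rho$. Then $\operatorname{div}\boldsymbol{q}=s\in L^2(\Omega)$, and the natural boundary condition gives $\boldsymbol{q}\cdot\boldsymbol{n}=0$ on $\partial\Omega$. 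Hence $\boldsymbol{q}\in H_0(\operatorname{div};\Omega)$, and moreover $\operatorname{curl}\boldsymbol{q}=\boldsymbol{0}$.

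The first step is to check that $\boldsymbol{q}\in Q$ in both regimes. For $\varepsilon>0$, Lemma \ref{lemmain} gives $Q=H_0(\operatorname{div};\Omega)$, so membership is immediate. For $\varepsilon=0$, the remark after Lemma \ref{lemmaimp} gives $Q=H_0(\operatorname{div};\Omega)\cap H(\operatorname{curl};\Omega)$. Since $\operatorname{curl}\nabla\rho=\boldsymbol{0}\in L^2$, the field $\boldsymbol{q}$ again belongs to $Q$.

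The second step is to show $b_1(\boldsymbol{\psi},\nabla\rho)=0$ for every $\boldsymbol{\psi}\in V$. We use the representation \eqref{Stokes9}. For $\varepsilon=0$ we have $b_1=(\boldsymbol{\psi},\operatorname{curl}\nabla\rho)=0$ directly. For $\varepsilon>0$ we have $b_1=(\operatorname{curl}\boldsymbol{\psi},\nabla\rho)$ with $\boldsymbol{\psi}\in H_0^1(\Omega;\mathbb{R}^3)$. Integrating by parts, using $\operatorname{div}\operatorname{curl}\boldsymbol{\psi}=0$ and $\operatorname{curl}\boldsymbol{\psi}\cdot\boldsymbol{n}=0$ on $\partial\Omega$, shows that this vanishes. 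More cleanly, one can argue by density: for $\boldsymbol{\psi}\in C_0^\infty$ we have $(\operatorname{curl}\boldsymbol{\psi},\nabla\rho)=(\boldsymbol{\psi},\operatorname{curl}\nabla\rho)=0$, and this extends by continuity in the $H^1$ norm. This step, making sure the pairing vanishes rigorously in both cases, is the main (though mild) obstacle. It is handled by density of $C_0^\infty$ in $V$ and the fact that $\nabla\rho$ is only $L^2$ with $L^2$ divergence.

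Finally, plugging this $\boldsymbol{q}$ into the defining relation of $W$ gives
\[
0=b_1(\boldsymbol{\psi},\nabla\rho)+b_2(s,\nabla\rho)=(s,\operatorname{div}\nabla\rho)=\|s\|_0^2 .
\]
Hence $s=0$ for every $\varepsilon\ge 0$, which is the claim.
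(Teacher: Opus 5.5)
Your proof is correct and is essentially the paper's argument: you test the defining relation of $W$ with $\boldsymbol{q}=\nabla\rho$, where $\rho$ solves the Neumann problem $\Delta\rho=s$, $\partial_n\rho=0$ (solvable because $s\in L_0^2(\Omega)$), so the curl term drops out and $(s,\operatorname{div}\nabla\rho)=\|s\|_0^2$. Beyond the paper, you also verify $\nabla\rho\in Q$ in both regimes and show rigorously, by density, that $(\operatorname{curl}\boldsymbol{\psi},\nabla\rho)=0$ for $\boldsymbol{\psi}\in H_0^1(\Omega;\mathbb{R}^3)$; your second step alone already gives $\|\nabla\rho\|_\varepsilon=\|s\|_0<\infty$, so you could confirm membership directly without appealing to Lemma \ref{lemmain}.
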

	
	\begin{proof} 
		For $s \in R$, let
	\[
	-\Delta \rho = -s,\quad \frac{\partial \rho}{\partial n}|_{\partial \Omega} = 0,
	\]
	which implies $\nabla \rho \in Q$.   Through the definition of $W$, we get
	\[
	(\boldsymbol{\psi}, \operatorname{curl} \nabla \rho) + (s, \operatorname{div} \nabla \rho) = 0.
	\]
	By the definition of $\rho$, we obtain $s = 0$.  \end{proof}
Especially, by the above lemma we can derive $r \equiv 0$  when $\varepsilon \geq 0$.
	
Through Lemma \ref{lemmain}-Lemma \ref{lemmazero} and the definition of 
$\|\boldsymbol{q}\|_\varepsilon$, the following inequalities hold for $\varepsilon \geq 0$:
	\begin{equation}
		\begin{aligned}
	&a_1(\boldsymbol{\phi}, r; \boldsymbol{\phi}, r) \ge C\left(\norm{\boldsymbol{\phi}}_\varepsilon^2 + \norm{r}_0^2\right),  \quad \forall (\boldsymbol{\phi}, r) \in W,  \\
	 &\sup_{\substack{\boldsymbol{0} \neq (\boldsymbol{\psi},s) \in V \times R}} \frac{b_1(\boldsymbol{\psi}, \boldsymbol{q}) + b_2(s, \boldsymbol{q})}{\|(\boldsymbol{\psi},s)\|_\varepsilon} \geq \beta \|\boldsymbol{q}\|_\varepsilon.
	\end{aligned}
	\end{equation}
	These two imply the stability condition 
	\begin{equation}\label{stokes5}
	\sup_{\boldsymbol{0} \neq (\boldsymbol{\psi},s,\boldsymbol{q}) \in V \times R \times Q} \frac{B_1(\hat{\boldsymbol{\phi}}, \hat{r}, \hat{\boldsymbol{p}}; \boldsymbol{\psi},s,\boldsymbol{q})}{\|(\boldsymbol{\psi},s)\|_\varepsilon + \|\boldsymbol{q}\|_\varepsilon} \gtrsim \|(\hat{\boldsymbol{\phi}} , \hat{r})\|_\varepsilon + \|\hat{\boldsymbol{p}}\|_\varepsilon,  \quad \forall  (\hat{\boldsymbol{\phi}}, \hat{r}, \hat{\boldsymbol{p}}) \in V \times R \times Q.
	\end{equation}
By the definitions of \eqref{Stokes7} and \eqref{Stokes9}, we can easily get the boundedness of $B_1(\cdot;\cdot)$ as follows.
	\begin{theorem}
	There exists a constant  $C>0$  such that
		\begin{equation}\label{boundnessB1}
	B_1(\tilde{\boldsymbol{\phi}},\tilde{r},\tilde{\boldsymbol{p}};\boldsymbol{\psi},s,\boldsymbol{q}) \leq C(\|\tilde{\boldsymbol{\phi}}\|_\varepsilon+\|\tilde{r}\|_0+\|\tilde{\boldsymbol{p}}\|_\varepsilon)\left(\|\boldsymbol{\psi}\|_\varepsilon+\|s\|_0+\|\boldsymbol{q}\|_\varepsilon\right),
			\end{equation}
    for all $(\tilde{\boldsymbol{\phi}},\tilde{r},\tilde{\boldsymbol{p}}),(\boldsymbol{\psi},s,\boldsymbol{q})\in V \times R \times Q.$
	\end{theorem}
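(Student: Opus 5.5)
The plan is to bound each of the five terms of $B_1$ separately and then sum. We read $B_1$ with the pairings intended by the variational problem: $a_1(\tilde{\boldsymbol{\phi}},\boldsymbol{\psi})$, $a_2(\tilde r,s)$, $b_1(\boldsymbol{\psi},\tilde{\boldsymbol{p}})$, $b_2(s,\tilde{\boldsymbol{p}})$, $b_1(\tilde{\boldsymbol{\phi}},\boldsymbol{q})$ and $b_2(\tilde r,\boldsymbol{q})$. The $a_2$ term is included because the system \eqref{equation4} contains $r$ on the diagonal. Each term should be bounded by a product of one trial norm and one test norm, with constant $1$.

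\textbf{The $a$-terms.} For $a_1$, the Cauchy--Schwarz inequality applied to both pieces gives
\[
\varepsilon^2(\nabla\tilde{\boldsymbol{\phi}},\nabla\boldsymbol{\psi})+(\tilde{\boldsymbol{\phi}},\boldsymbol{\psi})\le \varepsilon\|\nabla\tilde{\boldsymbol{\phi}}\|_0\,\varepsilon\|\nabla\boldsymbol{\psi}\|_0+\|\tilde{\boldsymbol{\phi}}\|_0\|\boldsymbol{\psi}\|_0 .
\]
The discrete Cauchy--Schwarz inequality in $\mathbb{R}^2$ then bounds the right-hand side by $\|\tilde{\boldsymbol{\phi}}\|_\varepsilon\|\boldsymbol{\psi}\|_\varepsilon$, using \eqref{Stokes1}. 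For $\varepsilon=0$ only the $L^2$ part is present, and the same bound holds. The $a_2$ term satisfies $(\tilde r,s)\le\|\tilde r\|_0\|s\|_0$.

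\textbf{The $b_1$ couplings.} These are where the definition \eqref{Stokes7} enters. For any $\boldsymbol{\psi}\in V$, $\boldsymbol{q}\in Q$ with $\boldsymbol{\psi}\ne0$, the supremum in \eqref{Stokes7} gives
\[
\langle\boldsymbol{\psi},\operatorname{curl}\boldsymbol{q}\rangle\le\|\boldsymbol{\psi}\|_\varepsilon\sup_{\boldsymbol{\chi}\in V}\frac{\langle\boldsymbol{\chi},\operatorname{curl}\boldsymbol{q}\rangle}{\|\boldsymbol{\chi}\|_\varepsilon}\le\|\boldsymbol{\psi}\|_\varepsilon\|\boldsymbol{q}\|_\varepsilon .
\]
The case $\boldsymbol{\psi}=0$ is trivial. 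By \eqref{Stokes9}, this pairing equals $b_1(\boldsymbol{\psi},\boldsymbol{q})$ in both cases $\varepsilon>0$ and $\varepsilon=0$. Applying it twice yields $b_1(\tilde{\boldsymbol{\phi}},\boldsymbol{q})\le\|\tilde{\boldsymbol{\phi}}\|_\varepsilon\|\boldsymbol{q}\|_\varepsilon$ and $b_1(\boldsymbol{\psi},\tilde{\boldsymbol{p}})\le\|\boldsymbol{\psi}\|_\varepsilon\|\tilde{\boldsymbol{p}}\|_\varepsilon$.

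\textbf{The $b_2$ couplings.} Here we use $(\tilde r,\operatorname{div}\boldsymbol{q})\le\|\tilde r\|_0\|\operatorname{div}\boldsymbol{q}\|_0\le\|\tilde r\|_0\|\boldsymbol{q}\|_\varepsilon$. This holds because the divergence part of \eqref{Stokes7} is dominated by the full norm. Likewise $(s,\operatorname{div}\tilde{\boldsymbol{p}})\le\|s\|_0\|\tilde{\boldsymbol{p}}\|_\varepsilon$.

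\textbf{Summing.} Adding the six bounds, every product is of the form (one trial norm)$\times$(one test norm). Each is therefore at most $(\|\tilde{\boldsymbol{\phi}}\|_\varepsilon+\|\tilde r\|_0+\|\tilde{\boldsymbol{p}}\|_\varepsilon)(\|\boldsymbol{\psi}\|_\varepsilon+\|s\|_0+\|\boldsymbol{q}\|_\varepsilon)$, so \eqref{boundnessB1} holds with, say, $C=6$, independent of $\varepsilon$.

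The only delicate point is the $b_1$ coupling. The obstacle there is making sure the duality pairing $\langle\cdot,\cdot\rangle$ is interpreted consistently via \eqref{Stokes9} for both $\varepsilon>0$ and $\varepsilon=0$. Once that is settled, the bound is immediate from the supremum definition of $\|\cdot\|_\varepsilon$ on $Q$, and no inf-sup argument or regularity result is needed.
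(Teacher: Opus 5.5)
Your termwise argument is correct and matches the paper. The paper's entire proof is the remark that the bound follows from the definitions \eqref{Stokes7} and \eqref{Stokes9}: Cauchy--Schwarz for $a_1$ and the $b_2$-terms, and the supremum in $\|\cdot\|_\varepsilon$ for the $b_1$-terms, with the evident typos in the definition of $B_1$ corrected exactly as you read them. One small discrepancy: in the paper $B_1$ does not contain $a_2(\tilde r,s)$ (that term is added only in $B=B_1+a_2$), but since you bound each term separately this is harmless, and your argument also yields \eqref{boundnessB}.
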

Clearly, \eqref{stokes5} and \eqref{boundnessB1} imply that the solution of \eqref{stokes4} is unique. 
    
	We are now in a position to state the modified version of \eqref{stokes4} : find $u,w \in H_0^1(\Omega)$, $\boldsymbol{\phi} \in V$, $\boldsymbol{p}\in Q$, $r\in R$ such that
	\begin{subequations}\label{stokes6}
	\begin{equation}\label{stokes6.1}
		(\nabla w,\nabla v)=(f,v), 
\end{equation}
\begin{equation}\label{stokes6.2}
	B(\boldsymbol{\phi}, r, \boldsymbol{p}; \boldsymbol{\psi}, s, \boldsymbol{q})=\mathcal{L}(\boldsymbol{\psi},s,\boldsymbol{q}),
	\end{equation}
	\begin{equation}\label{stokes6.3}
		(\nabla u,\nabla \xi)=(\boldsymbol{\phi},\nabla \xi),
	\end{equation}
	\end{subequations}
	where $B(\boldsymbol{\phi}, r, \boldsymbol{p};\boldsymbol{\psi} ,s,\boldsymbol{q}) = B_1(\boldsymbol{\phi}, r, \boldsymbol{q}; \boldsymbol{\psi},s,\boldsymbol{q}) + a_2(r,s)$, for all $v, \xi \in H_0^1(\Omega)$, $\boldsymbol{\psi} \in V$, $\boldsymbol{q} \in Q$, $s \in R$.
	
	Similarly, we also have the following inf-sup condition:
	\begin{equation}\label{stokes7}
	\sup_{\substack{\boldsymbol{0} \neq (\boldsymbol{\psi},s,\boldsymbol{q}) \in V \times R \times Q}} \frac{B(\hat{\boldsymbol{\phi}}, \hat{r}, \hat{\boldsymbol{p}}; \boldsymbol{\psi},s,\boldsymbol{q})}{\|(\boldsymbol{\psi},s)\|_\varepsilon + \|\boldsymbol{q}\|_\varepsilon} \gtrsim \|(\hat{\boldsymbol{\phi}}, \hat{r})\|_\varepsilon + \|\hat{\boldsymbol{p}}\|_\varepsilon,  \quad \forall  (\hat{\boldsymbol{\phi}}, \hat{r}, \hat{\boldsymbol{p}}) \in V \times R \times Q,
	\end{equation}
    and the boundedness of $B(\cdot;\cdot)$:
    \begin{equation}\label{boundnessB}
	B(\tilde{\boldsymbol{\phi}},\tilde{r},\tilde{\boldsymbol{p}};\boldsymbol{\psi},s,\boldsymbol{q}) \leq C(\|\tilde{\boldsymbol{\phi}}\|_\varepsilon+\|\tilde{r}\|_0+\|\tilde{\boldsymbol{p}}\|_\varepsilon)\left(\|\boldsymbol{\psi}\|_\varepsilon+\|s\|_0+\|\boldsymbol{q}\|_\varepsilon\right),
			\end{equation}
    for all $(\tilde{\boldsymbol{\phi}},\tilde{r},\tilde{\boldsymbol{p}}),(\boldsymbol{\psi},s,\boldsymbol{q})\in V \times R \times Q.$    
    
	It is clear from $r\equiv 0$ that the solution of \eqref{stokes4} is also the solution of \eqref{stokes6}. Then \eqref{stokes7} and \eqref{boundnessB} imply that \eqref{stokes4} and \eqref{stokes6} are equivalent. The subsequent part of the article will be based on \eqref{stokes6} for spatial discretization, by which we can establish robust mixed method based on MINI element.
	
	\section{Mixed finite element methods}\label{sec:3}
	Let \( \mathcal{T}_h \) be a triangulation of \( \Omega \). For each \( K \in \mathcal{T}_h \), let \( h_K = \operatorname{diam}(K) \) and define the mesh size $h = \max_{K \in \mathcal{T}_h} h_K$.
	We assume that the family of triangulations $\{\mathcal{T}_h\}_{h>0}$ is shape-regular and quasi-uniform. That is, there exists a constant $\rho > 0$ such that for all $K \in \mathcal{T}_h$,
	$h_K \le \rho \, \sigma_K$, where $\sigma_K$ is the diameter of the largest ball inscribed in $K$, and there exists a constant $c > 0$ such that $h \le c \min_{K \in \mathcal{T}_h} h_K$.
	
Then, we consider the following  several finite element spaces
	\begin{subequations}\label{finite1}
	\begin{align}
		U_h &=\left\{v_h \in H_0^1(\Omega) \mid v_h \mid \in P_1(K)  \quad \forall K \in \mathcal{T}_h\right\},\\
		V_h &= \left\{ \boldsymbol{\psi}_h \in V \mid \boldsymbol{\psi}_h|_K \in [P_1(K) \oplus B_4(K)]^3  \quad \forall K \in \mathcal{T}_h \right\}, \\
		R_h &= \left\{ s_h \in R \mid s_h|_K \in P_0(K) \quad  \forall K \in \mathcal{T}_h \right\}, \\
		Q_h &= \left\{ \boldsymbol{q}_h \in \Hdivo \cap H^1(\Omega;\mathbb{R}^3) \mid \boldsymbol{q}_h|_K \in P_1(K) \quad  \forall K \in \mathcal{T}_h \right\},
	\end{align}
	\end{subequations}
where
	\[
	B_4(K) = P_4(K) \cap H_0^1(K)
	\]
	consists of the polynomial bubble functions of degree at most 4. We also introduce the subspace $\overline{V}_h \subset V_h$, obtained by omitting the $B_4(K)$ bubble functions:
	\begin{equation}
	\overline{V}_h = \left\{ \boldsymbol{\psi}_h \in V \mid \boldsymbol{\psi}_h|_K \in P_1(K;\mathbb{R}^3)  \quad \forall K \in \mathcal{T}_h \right\}.
	\end{equation}
	\begin{lemma}\label{Lemma:Clement}
		There exists an operator $I_h$: $H_0^1(\Omega) \to U_h$, such that:
	\[
	\left( \sum_{K \in \mathcal{T}^h} \left\| I_h v \right\|_{k, K}^p \right)^{1/p}
	\leq C |v|_{k},
	\]
 and
	\[
	\left( \sum_{K \in \mathcal{T}^h} \left\| v - I_h v \right\|_{s, K}^p \right)^{1/p}
	\leq C h^{k-s} |v|_{s},
	\]
	for $0 \leq s \leq k \leq m$ \cite[\S 4.8]{brenner2008fembook}.
	
	There exists an $L^2$-projection $\pi_h$: $L_0^2(\Omega) \to R_h$, such that:
	
	$\quad \forall v\in L^2(\Omega)$,  $\pi_h v \in R_h$, 
	\[
	(\pi_h v ,\zeta_h)=(v, \zeta_h), \quad \forall \zeta_h \in R_h, 
	\]
	and \[
	\|v-\pi_h v\|_{0,\Omega} \leq Ch^{l+1}|v|_{l+1,\Omega},
	\]
	for $0 \leq l \leq k$. 
	\end{lemma}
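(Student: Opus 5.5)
The lemma has two parts: a quasi-interpolant into $U_h$ with local stability and approximation bounds, and the $L^2$ projection onto piecewise constants. I would prove each with the standard tools, taking as the model the Scott--Zhang (or Cl\'ement) construction of \cite[\S 4.8]{brenner2008fembook}.

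\textbf{The operator $I_h$.} For each interior vertex $z$ of $\mathcal{T}_h$, choose a face or element $\sigma_z$ containing $z$. Let $\theta_z\in P_1(\sigma_z)$ be the $L^2(\sigma_z)$-dual basis function to the nodal basis, and set
\[
I_h v=\sum_{z}\Big(\int_{\sigma_z}\theta_z v\Big)\lambda_z ,
\]
where $\lambda_z$ is the hat function at $z$. Boundary vertices receive coefficient zero, with $\sigma_z\subset\partial\Omega$ chosen there. Because $v\in H_0^1(\Omega)$ has zero trace, this choice keeps $I_h$ a projection onto $U_h$, and hence $I_hv\in U_h$. The argument then proceeds in three steps.
\begin{enumerate}
\item \emph{Local stability.} Shape regularity gives $\|\theta_z\|_{L^\infty}\lesssim|\sigma_z|^{-1}$. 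Together with the trace inequality on $\sigma_z$ and scaling, this yields
\[
\|I_hv\|_{k,K}\lesssim \sum_{j\le k} h^{j-k}|v|_{j,\omega_K},
\]
where $\omega_K$ is the patch of elements touching $K$.
\item \emph{Local approximation.} Since $I_h$ reproduces $P_1$ on each patch, for any $p\in P_1(\omega_K)$ we can write
\[
\|v-I_hv\|_{s,K}\le \|v-p\|_{s,K}+\|I_h(v-p)\|_{s,K}.
\]
The Bramble--Hilbert lemma on $\omega_K$ then gives $\|v-I_hv\|_{s,K}\lesssim h^{k-s}|v|_{k,\omega_K}$ for $0\le s\le k\le 2$, where $m=2$ is the maximal order allowed for $P_1$.
\item \emph{Summation.} Raise the local bounds to the power $p$ and sum over $K$. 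Shape regularity bounds the number of patches $\omega_K$ overlapping any given element, so the sum is controlled by a constant times the global seminorm.
\end{enumerate}
Two remarks on the statement as written. First, the exponent in the approximation bound should be $|v|_k$, i.e.\ $h^{k-s}|v|_k$; the displayed $|v|_s$ looks like a typo. Second, the stability bound is really stated with $\|v\|_k$. To reduce it to the seminorm one can use Poincar\'e's inequality on $H_0^1$, which is valid at the level of the global sum.

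\textbf{The projection $\pi_h$.} Define $\pi_hv|_K=|K|^{-1}\int_Kv$. The orthogonality relation $(\pi_hv,\zeta_h)=(v,\zeta_h)$ is immediate elementwise. If $v\in L_0^2(\Omega)$, taking $\zeta_h=1$ shows that $\pi_hv$ has zero mean, so $\pi_hv\in R_h$. For the error bound:
\begin{itemize}
\item for $l=0$, use $\|v-\pi_hv\|_{0,K}\lesssim h_K|v|_{1,K}$, which is the Poincar\'e inequality for zero-mean functions on the convex set $K$;
\item for $l=-1$ in the shifted indexing, use the trivial $L^2$-stability $\|v-\pi_hv\|_0\le\|v\|_0$.
\end{itemize}
Summing over $K$ gives the global estimate. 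Note that since $\pi_h$ only reproduces constants, the order $h^{l+1}$ holds only for $l\le 0$; the range ``$0\le l\le k$'' must be read with $k=0$.

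\textbf{Main obstacle.} The delicate point is the Scott--Zhang stability in Step 1 near the boundary. It requires the trace inequality
\[
\|v\|_{0,\sigma_z}\lesssim h^{-1/2}\|v\|_{0,T}+h^{1/2}|v|_{1,T}
\]
on the face $\sigma_z$ of an element $T$, and care to preserve the homogeneous boundary condition. I would follow the proof in \cite{brenner2008fembook} verbatim at that step rather than reprove it.
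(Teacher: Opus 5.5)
Your proposal is correct and follows the paper's route: the paper offers no argument beyond citing the Scott--Zhang construction of \cite[\S 4.8]{brenner2008fembook} for $I_h$ and treating $\pi_h$ as the standard elementwise mean, and your corrections to the statement are right, namely $h^{k-s}|v|_k$ in place of $h^{k-s}|v|_s$ and only $l=0$ for the piecewise-constant projection. One small tightening: face-based degrees of freedom are not $L^2$-stable, so your local bound should read $\sum_{j\le\max(k,1)}$, and to cover $k=0$ on $H_0^1$ you should take $\sigma_z$ to be an element at every interior vertex (the boundary coefficients vanish anyway for $v\in H_0^1$).
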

	In what follows, we use $I_h$ to denote the interpolation operator for both scalar- and vector-valued functions.
	
	The finite element formulation reads: find $u_h,w_h \in U_h$, $(\boldsymbol{\phi}_h, r_h, \boldsymbol{p}_h,) \in V_h \times R_h \times Q_h$ such that
	\begin{subequations}\label{stokes8}
	\begin{equation}\label{stokes8.1}
	(\nabla w_h,\nabla v_h)=(f,v), 
	\end{equation}
	\begin{equation}\label{stokes8.2}
	B(\boldsymbol{\phi}_h, r_h, \boldsymbol{p}_h; \boldsymbol{\psi}_h, s_h, \boldsymbol{q}_h)=\mathcal{L}(\boldsymbol{\psi}_h,s_h,\boldsymbol{q}_h),
	\end{equation}
	\begin{equation}\label{stokes8.3}
	(\nabla u_h,\nabla \xi_h)=(\boldsymbol{\phi}_h,\nabla \xi_h),
\end{equation}
\end{subequations}
for all $v_h, \xi_h \in U_h$, $(\boldsymbol{\psi}_h, s_h, \boldsymbol{q}_h)\in V_h \times R_h \times Q_h $.
 
\begin{remark} 
Noting that $\operatorname{div} \boldsymbol{p}_h \in R_h$, we can eliminate $r_h$ using $a_2(\cdot, \cdot)$,  thereby reducing the actual computation to solving only for $\boldsymbol{\phi}_h$ and $\boldsymbol{p}_h$. This approach is employed in the numerical examples presented in Section \ref{sec:4}.	
	\end{remark}

	\subsection{Stability}
To prove the stability of our formulation we have to verify the two conditions, the ellipticity and the inf-sup condition. For this we will utilize the following discrete counterpart of the norm \eqref{Stokes7}:
	\begin{equation}\label{disrete1}
	\|\boldsymbol{q}\|_{\varepsilon,h}^2 = \sum_{K \in \mathcal{T}_h} \frac{h_K^2}{\varepsilon^2 + h_K^2} \norm{\operatorname{curl} \boldsymbol{q}}_{0,K}^2 + \norm{\operatorname{div} \boldsymbol{q}}_{0,K}^2.
	\end{equation}
	
	Next we will prove the inf-sup condition with this norm.
	
	\begin{lemma}\label{lemmaness} There is a constant $C>0$, independent of $h$  \text{and} $\varepsilon$, such that
	\begin{equation}
	\sup_{\boldsymbol{0} \neq (\boldsymbol{\psi},s) \in V_h \times R_h} \frac{b_1(\boldsymbol{\psi}, \boldsymbol{q}) + b_2(s, \boldsymbol{q})}{\norm{(\boldsymbol{\psi}, s)}_\varepsilon} \geq C \|\boldsymbol{q}\|_{\varepsilon,h}  \quad \forall \boldsymbol{q} \in Q_h.
	\end{equation}
	\end{lemma}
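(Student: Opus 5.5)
The idea is to adapt the Mardal--Winther-type argument for the MINI element in the Brinkman problem to the curl pairing. Since $V_h\subset V$ and $R_h\subset R$, the supremum can be bounded from below by testing separately with a pair $(\boldsymbol{\psi},0)$ and a pair $(0,s)$. The two resulting bounds are then combined by a triangle-type argument: if each supremum controls one part of $\|\boldsymbol{q}\|_{\varepsilon,h}^2$ up to constants, the full inf-sup follows.

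\emph{Divergence part.} For $\boldsymbol{q}\in Q_h$ we have $\operatorname{div}\boldsymbol{q}\in R_h$. This holds because $\boldsymbol{q}$ is piecewise linear, and $\int_\Omega\operatorname{div}\boldsymbol{q}=0$ since $\boldsymbol{q}\cdot\boldsymbol{n}=0$. Choosing $\boldsymbol{\psi}=0$ and $s=\operatorname{div}\boldsymbol{q}$ gives
\[
\sup \ge \frac{\|\operatorname{div}\boldsymbol{q}\|_0^2}{\|\operatorname{div}\boldsymbol{q}\|_0}=\|\operatorname{div}\boldsymbol{q}\|_0 .
\]

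\emph{Curl part.} This step uses the bubbles. On each $K$, $\operatorname{curl}\boldsymbol{q}|_K$ is a constant vector $\boldsymbol{c}_K$. Define $\boldsymbol{\psi}\in V_h$ by $\boldsymbol{\psi}|_K=\alpha_K b_K\boldsymbol{c}_K$, where $b_K\in B_4(K)$ is the standard quartic bubble. Integrating by parts elementwise (the bubble vanishes on $\partial K$) gives
\[
b_1(\boldsymbol{\psi},\boldsymbol{q})=(\operatorname{curl}\boldsymbol{\psi},\boldsymbol{q})=(\boldsymbol{\psi},\operatorname{curl}\boldsymbol{q})=\sum_K\alpha_K|\boldsymbol{c}_K|^2\int_K b_K\eqsim\sum_K\alpha_K\|\operatorname{curl}\boldsymbol{q}\|_{0,K}^2 .
\]
Scaling arguments then give
\[
\|\boldsymbol{\psi}\|_{0,K}\eqsim\alpha_K\|\operatorname{curl}\boldsymbol{q}\|_{0,K},\qquad \|\nabla\boldsymbol{\psi}\|_{0,K}\eqsim h_K^{-1}\alpha_K\|\operatorname{curl}\boldsymbol{q}\|_{0,K},
\]
so that
\[
\|\boldsymbol{\psi}\|_\varepsilon^2\eqsim\sum_K\alpha_K^2\,\frac{\varepsilon^2+h_K^2}{h_K^2}\,\|\operatorname{curl}\boldsymbol{q}\|_{0,K}^2 .
\]
Take $\alpha_K=h_K^2/(\varepsilon^2+h_K^2)$. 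Then both the numerator and $\|\boldsymbol{\psi}\|_\varepsilon^2$ are comparable to
\[
\sum_K \frac{h_K^2}{\varepsilon^2+h_K^2}\|\operatorname{curl}\boldsymbol{q}\|_{0,K}^2 ,
\]
so the quotient is $\gtrsim\bigl(\sum_K \frac{h_K^2}{\varepsilon^2+h_K^2}\|\operatorname{curl}\boldsymbol{q}\|_{0,K}^2\bigr)^{1/2}$.

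Adding the two lower bounds yields the claim with a constant independent of $h$ and $\varepsilon$. Quasi-uniformity is not even needed here, since everything is local.

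The main subtlety is the second step. One must check that the equivalence constants in the scaling of $b_K$ depend only on shape regularity, and that the weight $\alpha_K$ correctly balances the $\varepsilon^2|\cdot|_1$ and $L^2$ contributions in both regimes $\varepsilon\ll h_K$ and $\varepsilon\gg h_K$. The integration by parts in the curl pairing must also hold exactly. It does, because $\boldsymbol{\psi}\in H_0^1(K)$ on each element, so no boundary term appears. Unlike the Stokes case, no Fortin operator is required, because the discrete norm only weights the local curl.
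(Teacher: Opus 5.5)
Your proof is correct and is essentially the paper's argument. The paper uses the same bubble test function $\boldsymbol{\psi}|_K=\frac{h_K^2}{\varepsilon^2+h_K^2}b_K\operatorname{curl}\boldsymbol{q}|_K$ together with $s=\operatorname{div}\boldsymbol{q}$; the only cosmetic difference is that it tests with the combined pair $(\boldsymbol{\psi},s)$ at once, whereas you test with $(\boldsymbol{\psi},0)$ and $(0,s)$ separately and then take the maximum.
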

	\begin{proof} 
	  Given $\boldsymbol{q} \in Q_h$, we have $\operatorname{curl} \boldsymbol{q}|_K \in P_0(K;\mathbb{R}^3)$ and $\operatorname{div} \boldsymbol{q}|_K \in P_0(K)$. Hence, we can define $\boldsymbol{\psi} \in V_h$ and $s \in R_h$ by
	\begin{equation}
	\boldsymbol{\psi}|_K = \frac{h_K^2}{\varepsilon^2 + h_K^2} b_K \operatorname{curl} \boldsymbol{q}|_K, \quad s = \operatorname{div} \boldsymbol{q},
	\end{equation}
	where $b_K$ is the quartic bubble on $K$. For this choice of $\boldsymbol{\psi}$, we get
	\begin{equation}
	b_1(\boldsymbol{\psi}, \boldsymbol{q}) = (\boldsymbol{\psi}, \operatorname{curl} \boldsymbol{q}) \geq C \sum_{K \in \mathcal{T}_h} \frac{h_K^2}{\varepsilon^2 + h_K^2} \|\operatorname{curl} \boldsymbol{q}\|_{0,K}^2,
	\end{equation}
	and
		\begin{equation}\label{discrete2}
	\begin{aligned} 
		\|\boldsymbol{\psi}\|_\varepsilon^2 &= \varepsilon^2 \|\nabla \boldsymbol{\psi}\|_0^2 + \|\boldsymbol{\psi}\|_0^2 \leq C \sum_{K \in \mathcal{T}_h} \left( \varepsilon^2 h_K^{-2} + 1 \right) \|\boldsymbol{\psi}\|_{0,K}^2 \\
		&\leq C \sum_{K \in \mathcal{T}_h} \left( \varepsilon^2 h_K^{-2} + 1 \right) \left( \frac{h_K^2}{\varepsilon^2 + h_K^2} \right)^2 \|\operatorname{curl} \boldsymbol{q}\|_{0,K}^2 \\
		&= C \sum_{K \in \mathcal{T}_h} \frac{h_K^2}{\varepsilon^2 + h_K^2} \|\operatorname{curl} \boldsymbol{q}\|_{0,K}^2.
	\end{aligned}
\end{equation}
	By the definition of $s$, we obtain
	\begin{equation}\label{discrete3}
	b_2(s, \boldsymbol{q}) = \|\operatorname{div} \boldsymbol{q}\|_0^2 \quad \text{and} \quad \|s\|_0^2 = \|\operatorname{div} \boldsymbol{q}\|_0^2.
	\end{equation}
	Combining \eqref{discrete2}-\eqref{discrete3}, we have
		\begin{equation}
	b_1(\boldsymbol{\psi}, \boldsymbol{q}) + b_2(s, \boldsymbol{q}) \geq C \|\boldsymbol{q}\|_{\varepsilon,h}^2,
		\end{equation}
	and
	\begin{equation}
	\|(\boldsymbol{\psi}, s)\|_\varepsilon \leq C \|\boldsymbol{q}\|_{\varepsilon,h},
	\end{equation}
	which completes the proof.
	\end{proof}
	\begin{lemma}\label{lemmacrit} There is a constant $C>0$ such that
	\begin{equation}
	\sup_{\boldsymbol{0} \neq (\boldsymbol{\psi},s) \in V_h \times R_h} \frac{b_1(\boldsymbol{\psi}, \boldsymbol{q}) + b_2(s, \boldsymbol{q})}{\norm{(\boldsymbol{\psi}, s)}_\varepsilon} 
	\geq C \norm{\boldsymbol{q}}_\varepsilon^2, \quad \forall \boldsymbol{q} \in Q_h.
	\end{equation}
	\end{lemma}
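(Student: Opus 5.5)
We read the right-hand side as $C\norm{\boldsymbol{q}}_\varepsilon$; the square is presumably a typo, since the left-hand side is homogeneous of degree one in $\boldsymbol{q}$. The plan is the standard argument of \cite{MR2672618}. We transfer the continuous inf-sup condition to $V_h\times R_h$ with an interpolation operator that works uniformly in $\varepsilon$. This costs a perturbation term measured in $\|\boldsymbol{q}\|_{\varepsilon,h}$, which Lemma \ref{lemmaness} then absorbs.

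\emph{Step 1: continuous inf-sup.} Fix $\boldsymbol{q}\in Q_h\subset Q$. By the continuous inf-sup condition stated before \eqref{stokes5} (from Lemmas \ref{lemmain}--\ref{lemmaimp} and the definition \eqref{Stokes7}), there is $(\boldsymbol{\psi},s)\in V\times R$ with $\|(\boldsymbol{\psi},s)\|_\varepsilon=1$ and
\[
b_1(\boldsymbol{\psi},\boldsymbol{q})+b_2(s,\boldsymbol{q})\ge \beta\|\boldsymbol{q}\|_\varepsilon .
\]

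\emph{Step 2: discrete test pair.} Set $\boldsymbol{\psi}_h=I_h\boldsymbol{\psi}\in\overline V_h\subset V_h$, using the Cl\'ement-type operator of Lemma \ref{Lemma:Clement}, and $s_h=\pi_h s$. Since $\operatorname{div}\boldsymbol{q}|_K\in P_0(K)$, we have $b_2(s-\pi_h s,\boldsymbol{q})=0$. Since $\boldsymbol{\psi}-I_h\boldsymbol{\psi}$ vanishes on $\partial\Omega$ and $\operatorname{curl}\boldsymbol{q}$ is piecewise constant, integration by parts gives
\[
b_1(\boldsymbol{\psi}-I_h\boldsymbol{\psi},\boldsymbol{q})=\sum_K(\boldsymbol{\psi}-I_h\boldsymbol{\psi},\operatorname{curl}\boldsymbol{q})_K .
\]
This identity holds for $\varepsilon>0$ and $\varepsilon=0$ alike.

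\emph{Step 3: uniform interpolation bound.} Here we need the local estimate
\[
\|\boldsymbol{\psi}-I_h\boldsymbol{\psi}\|_{0,K}\lesssim \min\bigl(\|\boldsymbol{\psi}\|_{0,\omega_K},\,h_K|\boldsymbol{\psi}|_{1,\omega_K}\bigr)\lesssim \Bigl(\frac{h_K^2}{\varepsilon^2+h_K^2}\Bigr)^{1/2}\bigl(\|\boldsymbol{\psi}\|_{0,\omega_K}^2+\varepsilon^2|\boldsymbol{\psi}|_{1,\omega_K}^2\bigr)^{1/2}.
\]
The second inequality uses $\min(a,hb)^2\le \frac{2h^2}{h^2+\varepsilon^2}(a^2+\varepsilon^2b^2)$. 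Summing with Cauchy--Schwarz and finite patch overlap gives $b_1(\boldsymbol{\psi}-I_h\boldsymbol{\psi},\boldsymbol{q})\le C_1\|\boldsymbol{q}\|_{\varepsilon,h}$.

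The same $L^2$ and $H^1$ stability of $I_h$, together with $\|\pi_h s\|_0\le\|s\|_0$, gives $\|(I_h\boldsymbol{\psi},\pi_h s)\|_\varepsilon\le C_2$. Hence
\[
\sup_{(\boldsymbol{\psi}_h,s_h)}\frac{b_1(\boldsymbol{\psi}_h,\boldsymbol{q})+b_2(s_h,\boldsymbol{q})}{\|(\boldsymbol{\psi}_h,s_h)\|_\varepsilon}\ge C_2^{-1}\bigl(\beta\|\boldsymbol{q}\|_\varepsilon-C_1\|\boldsymbol{q}\|_{\varepsilon,h}\bigr).
\]

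\emph{Step 4: absorb the perturbation.} Denote by $S$ the supremum on the left. Lemma \ref{lemmaness} gives $S\ge C_3\|\boldsymbol{q}\|_{\varepsilon,h}$. Take a combination $\theta\cdot(\text{Step 3})+(1-\theta)\cdot(\text{Lemma \ref{lemmaness}})$ with $\theta$ small enough that $(1-\theta)C_3\ge\theta C_1/C_2$. The $\|\boldsymbol{q}\|_{\varepsilon,h}$ terms then cancel and leave $S\ge c\|\boldsymbol{q}\|_\varepsilon$, with $c$ independent of $h$ and $\varepsilon$.

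The main obstacle is Step 3. It requires an interpolant that is simultaneously $L^2$-stable and first-order accurate in $H^1$ on patches, and that preserves the homogeneous boundary condition. The Cl\'ement operator of Lemma \ref{Lemma:Clement}, applied with $s=k=0$ and with $s=0,k=1$, provides exactly this; shape regularity keeps the patch constants uniform.
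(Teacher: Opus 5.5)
Your proposal is correct and follows the paper's proof essentially step for step: the continuous inf-sup, a Cl\'ement interpolant of $\boldsymbol{\psi}$ together with the $L^2$-projection of $s$, a bound on $(\boldsymbol{\psi}-I_h\boldsymbol{\psi},\operatorname{curl}\boldsymbol{q})$ by $\|\boldsymbol{q}\|_{\varepsilon,h}\|\boldsymbol{\psi}\|_\varepsilon$ via weighted Cl\'ement estimates, and absorption of that term through Lemma \ref{lemmaness} with a convex combination. You are also right to read $\|\boldsymbol{q}\|_\varepsilon^2$ as a typo for $\|\boldsymbol{q}\|_\varepsilon$ (the paper's own proof mixes the two), and your local $\min$-form of the interpolation bound is a tidier version of the paper's weighted sum, which writes $h_K^2$ where $h_K^{-2}$ is meant.
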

	\begin{proof}
	 By Lemma \ref{lemmain}, there exist $(\boldsymbol{\psi},s) \in V \times R$ such that
	\begin{equation}
	b_1(\boldsymbol{\psi}, \boldsymbol{q}) + b_2(s, \boldsymbol{q}) \geq \norm{\boldsymbol{q}}_\varepsilon^2 \quad \text{and} \quad	\norm{(\boldsymbol{\psi}, s)}_\varepsilon \leq \norm{\boldsymbol{q}}_\varepsilon.
	\end{equation}
	With $\hat{\boldsymbol{\psi}} \in \overline{V_h}$ we denote the Clément interpolant of $\boldsymbol{\psi}$. For this there hold
		\begin{equation}
	\sum_{K \in \mathcal{T}_h} h_K^2 \norm{\boldsymbol{\psi} - \hat{\boldsymbol{\psi}}}_{0,K}^2 \leq C \norm{\nabla \boldsymbol{\psi}}_0^2,
		\end{equation}
	\begin{equation}
	\norm{\hat{\boldsymbol{\psi}}}_0 \leq C \norm{\boldsymbol{\psi}}_0 \quad \text{and} \quad \norm{\nabla \hat{\boldsymbol{\psi}}}_0 \leq C \norm{\nabla \boldsymbol{\psi}}_0.
	\end{equation}
	This gives
	\begin{equation}
	\begin{aligned}[b]
		\sum_{K \in \mathcal{T}_h} \left( \frac{\varepsilon + h_K}{h_K} \right)^2 \norm{\hat{\boldsymbol{\psi}} - \boldsymbol{\psi}}_{0,K}^2
		&\leq 2 \sum_{K \in \mathcal{T}_h} \left( \left( \frac{\varepsilon}{h_K} \right)^2 + 1 \right) \norm{\hat{\boldsymbol{\psi}} - \boldsymbol{\psi}}_{0,K}^2 \\
		&\leq C \norm{\boldsymbol{\psi}}_\varepsilon^2.
	\end{aligned}
		\end{equation}
	Using the estimates above, we obtain
		\begin{equation}
	\begin{aligned}[b]
		b_1(\hat{\boldsymbol{\psi}}, \boldsymbol{q}) &+ b_2(\pi_h s, \boldsymbol{q}) = (\hat{\boldsymbol{\psi}}, \operatorname{curl} \boldsymbol{q}) + (\pi_h s, \operatorname{div} \boldsymbol{q}) \\
		&= (\boldsymbol{\psi}, \operatorname{curl} \boldsymbol{q}) + (\hat{\boldsymbol{\psi}} - \boldsymbol{\psi}, \operatorname{curl} \boldsymbol{q}) + (s, \operatorname{div}\boldsymbol{q}) \\
		&\geq \|\boldsymbol{q}\|_\varepsilon^2 - \sum_{K \in \mathcal{T}_h} \frac{h_K}{\varepsilon + h_K} \|\operatorname{curl} \boldsymbol{q}\|_{0,K} \frac{\varepsilon + h_K}{h_K} \|\hat{\boldsymbol{\psi}} - \boldsymbol{\psi}\|_{0,K} \\
		&\geq \|\boldsymbol{q}\|_\varepsilon^2 - \|\boldsymbol{q}\|_{\varepsilon,h} \left( \sum_{K \in \mathcal{T}_h} \left( \frac{\varepsilon + h_K}{h_K} \right)^2 \|\hat{\boldsymbol{\psi}} - \boldsymbol{\psi}\|_{0,K}^2 \right)^{\frac{1}{2}} \\
		&\geq \|\boldsymbol{q}\|_\varepsilon^2 - C \|\boldsymbol{q}\|_{\varepsilon,h} \|\boldsymbol{\psi}\|_\varepsilon \\
		&\geq \left( \|\boldsymbol{q}\|_\varepsilon - C \|\boldsymbol{q}\|_{\varepsilon,h} \right) \|\boldsymbol{\psi}\|_\varepsilon,
	\end{aligned}
		\end{equation}
	where $\pi_h s\in R_h$ is the  $L^2$-projection of $s$.  Now, if $\|\boldsymbol{q}\|_\varepsilon - C \|\boldsymbol{q}\|_{\varepsilon,h} < 0$, then the assertion follows from Lemma \ref{lemmaness}. Otherwise,  they give
			\begin{equation}
	\sup_{\substack{\boldsymbol{0} \neq (\boldsymbol{\psi},s) \in V_h \times R_h}} \frac{b_1(\boldsymbol{\psi}, \boldsymbol{q}) + b_2(s, \boldsymbol{q})}{\|(\boldsymbol{\psi}, s)\|_\varepsilon} \geq C_1 \|\boldsymbol{q}\|_\varepsilon - C \|\boldsymbol{q}\|_{\varepsilon,h}.
			\end{equation}
	Combining this estimate and Lemma \ref{lemmaness}, with $0 < \alpha < 1$, we derive
	\begin{align*}
		\sup_{\substack{\boldsymbol{0} \neq (\boldsymbol{\psi},s) \in V_h \times R_h}} \frac{b_1(\boldsymbol{\psi}, \boldsymbol{q}) + b_2(s, \boldsymbol{q})}{\|(\boldsymbol{\psi}, s)\|_\varepsilon}
		&= \alpha \sup_{\substack{\boldsymbol{0} \neq (\boldsymbol{\psi},s) \in V_h \times R_h}} \frac{b_1(\boldsymbol{\psi}, \boldsymbol{q}) + b_2(s, \boldsymbol{q})}{\|(\boldsymbol{\psi}, s)\|_\varepsilon} \\
		&\quad + (1-\alpha) \sup_{\substack{\boldsymbol{0} \neq (\boldsymbol{\psi},s) \in V_h \times R_h}} \frac{b_1(\boldsymbol{\psi}, \boldsymbol{q}) + b_2(s, \boldsymbol{q})}{\|(\boldsymbol{\psi}, s)\|_\varepsilon}\\
		&\geq \alpha C_1 \norm{\boldsymbol{q}}_\varepsilon^2 - \alpha C_2 \norm{\boldsymbol{q}}_{\varepsilon,h}^2 + (1-\alpha) C \norm{\boldsymbol{q}}_{\varepsilon,h}^2 \\
		&= \alpha C_1 \norm{\boldsymbol{q}}_\varepsilon^2 + \left(C - \alpha(C + C_2)\right) \norm{\boldsymbol{q}}_{\varepsilon,h}.
	\end{align*}
	Choosing $\alpha$ such that $0 < \alpha < C/(C + C_2)$ proves the assertion.
	\end{proof}
	Lemma \ref{lemmazero} and Lemma \ref{lemmaness} give the stability result.
	
	\begin{theorem} \label{stability}
		There is a constant $C>0$ such that
			\begin{equation}
	\sup_{\substack{\boldsymbol{0} \neq (\boldsymbol{\psi}, s,\boldsymbol{q}) \in V_h \times R_h \times Q_h}}
	\frac{B(\tilde{\boldsymbol{\phi}},\tilde{r},\tilde{\boldsymbol{p}}; \boldsymbol{\psi}, s, \boldsymbol{q})}{\norm{(\boldsymbol{\psi},s)}_\varepsilon + \norm{\boldsymbol{q}}_\varepsilon}
	\ge C\left(\norm{(\tilde{\boldsymbol{\phi}},\tilde{r})}_\varepsilon + \norm{\tilde{\boldsymbol{p}}}_\varepsilon\right) 
	\quad \forall (\tilde{\boldsymbol{\phi}},\tilde{s},\tilde{\boldsymbol{p}}) \in V_h \times R_h \times Q_h.
			\end{equation}
	\end{theorem}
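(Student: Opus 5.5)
The plan is to prove the discrete inf-sup condition for the full form $B$ with the standard Brezzi-type construction, as in the continuous case \eqref{stokes7}. Here the discrete inf-sup for $b_1+b_2$ comes from Lemma \ref{lemmacrit}, read with the right-hand side $C\|\boldsymbol{q}\|_\varepsilon$. Coercivity of $a_1+a_2$ is easy: the $a_2$ term gives $a_1(\tilde{\boldsymbol{\phi}},\tilde{\boldsymbol{\phi}})+a_2(\tilde r,\tilde r)=\|(\tilde{\boldsymbol{\phi}},\tilde r)\|_\varepsilon^2$ on all of $V_h\times R_h$, not only on a kernel. This is exactly why the $L^2$ term was added.

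Fix $(\tilde{\boldsymbol{\phi}},\tilde r,\tilde{\boldsymbol{p}})\in V_h\times R_h\times Q_h$. Interpreting $B$ as the symmetric saddle form
\[
B=a_1(\tilde{\boldsymbol{\phi}},\boldsymbol{\psi})+a_2(\tilde r,s)+b_1(\boldsymbol{\psi},\tilde{\boldsymbol{p}})+b_2(s,\tilde{\boldsymbol{p}})+b_1(\tilde{\boldsymbol{\phi}},\boldsymbol{q})+b_2(\tilde r,\boldsymbol{q}),
\]
with the sign of the constraint rows chosen as in \eqref{equation4} (adjust signs if needed), I proceed in three steps.

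\emph{Step 1.} Take the test function $(\boldsymbol{\psi},s,\boldsymbol{q})=(\tilde{\boldsymbol{\phi}},\tilde r,-\tilde{\boldsymbol{p}})$. The coupling terms cancel, so $B=\|(\tilde{\boldsymbol{\phi}},\tilde r)\|_\varepsilon^2$.

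\emph{Step 2.} By Lemma \ref{lemmacrit}, choose $(\boldsymbol{\psi}_0,s_0)\in V_h\times R_h$ with $\|(\boldsymbol{\psi}_0,s_0)\|_\varepsilon=\|\tilde{\boldsymbol{p}}\|_\varepsilon$ and
\[
b_1(\boldsymbol{\psi}_0,\tilde{\boldsymbol{p}})+b_2(s_0,\tilde{\boldsymbol{p}})\ge C\|\tilde{\boldsymbol{p}}\|_\varepsilon^2 .
\]
Testing with $(\boldsymbol{\psi}_0,s_0,\boldsymbol{0})$ then gives
\[
B\ge C\|\tilde{\boldsymbol{p}}\|_\varepsilon^2-C'\|(\tilde{\boldsymbol{\phi}},\tilde r)\|_\varepsilon\|\tilde{\boldsymbol{p}}\|_\varepsilon ,
\]
using the boundedness of $a_1,a_2$ in the $\|\cdot\|_\varepsilon$ norm.

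\emph{Step 3.} Combine the two: test with $(\tilde{\boldsymbol{\phi}}+\delta\boldsymbol{\psi}_0,\ \tilde r+\delta s_0,\ -\tilde{\boldsymbol{p}})$. Young's inequality yields
\[
B\ge \tfrac12\|(\tilde{\boldsymbol{\phi}},\tilde r)\|_\varepsilon^2+\tfrac{C\delta}{2}\|\tilde{\boldsymbol{p}}\|_\varepsilon^2
\]
for $\delta$ small, depending only on $C,C'$. The test function has norm $\lesssim\|(\tilde{\boldsymbol{\phi}},\tilde r)\|_\varepsilon+\|\tilde{\boldsymbol{p}}\|_\varepsilon$, so dividing by it gives the claim with constants independent of $h$ and $\varepsilon$.

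The main obstacle is the discrete inf-sup of Lemma \ref{lemmacrit} in the form needed, namely a linear lower bound $C\|\boldsymbol{q}\|_\varepsilon$ uniform in $\varepsilon$. That lemma's statement contains a squared norm, and its proof mixes $\|\cdot\|_\varepsilon$ with $\|\cdot\|_{\varepsilon,h}$. I would carefully redo the combination argument with unsquared quantities: first show the supremum is $\ge C_1\|\boldsymbol{q}\|_\varepsilon-C_2\|\boldsymbol{q}\|_{\varepsilon,h}$, then use Lemma \ref{lemmaness} (supremum $\ge C\|\boldsymbol{q}\|_{\varepsilon,h}$), and finally take a convex combination.

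A second point needs care. For $\boldsymbol{q}\in Q_h$ and $\boldsymbol{\psi}\in V_h\subset H^1_0$, we have $b_1(\boldsymbol{\psi},\boldsymbol{q})=(\boldsymbol{\psi},\operatorname{curl}\boldsymbol{q})$ by integration by parts, since $\boldsymbol{\psi}\times\boldsymbol{n}=0$. The boundedness of $b_1$ in $\|\boldsymbol{\psi}\|_\varepsilon\|\boldsymbol{q}\|_\varepsilon$ then follows directly from definition \eqref{Stokes7}, so Step 2 is legitimate for all $\varepsilon\ge0$.
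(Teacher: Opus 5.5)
Your proof is correct and is essentially the Babu\v{s}ka--Brezzi argument that the paper's one-line justification (coercivity plus the discrete inf-sup for $b_1+b_2$) relies on, written out with explicit test functions. Your version is in fact more precise: you use coercivity of $a_1+a_2$ on all of $V_h\times R_h$, so no discrete-kernel characterization is needed (the paper cites Lemma~\ref{lemmazero}, which concerns only the continuous kernel), and you correctly identify that the unsquared form of Lemma~\ref{lemmacrit}, not Lemma~\ref{lemmaness}, is what controls $\|\boldsymbol{q}\|_\varepsilon$.
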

	Theorem	\ref{stability} implies that problem \eqref{stokes8} is well- posed. 
	
	\subsection{A priori estimate}
	The stability estimate of Theorem \ref{stability} gives the following quasi-optimality result.
	\begin{theorem}\label{priori}
		 There exists a constant $C>0$ such that
	\begin{equation}
		\begin{split}
			\|\boldsymbol{\phi} - \boldsymbol{\phi}_h\|_\varepsilon + \|r - r_h\|_0 + \|\boldsymbol{p} - \boldsymbol{p}_h\|_\varepsilon
			\leq C \Bigg\{ &\left( \inf_{\hat{\boldsymbol{\psi}} \in V_h} \|\boldsymbol{\phi} - \hat{\boldsymbol{\psi}}\|_\varepsilon
			+ \inf_{\hat{r} \in R_h} \|r - \hat{r}\|_0
			+ \inf_{\hat{\boldsymbol{q}} \in Q_h} \|\boldsymbol{p} - \hat{\boldsymbol{q}}\|_\varepsilon \right) \\
			&+ \|w - w_h\|_1 \Bigg\}.
		\end{split}
	\end{equation}
	\end{theorem}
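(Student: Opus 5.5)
The estimate is a Strang-type quasi-optimality result. The continuous problem \eqref{stokes6.2} and the discrete problem \eqref{stokes8.2} have the same bilinear form $B$ but different right-hand sides: $(\nabla w,\boldsymbol{\psi})$ versus $(\nabla w_h,\boldsymbol{\psi}_h)$. The plan is therefore to combine the discrete stability of Theorem \ref{stability}, the boundedness \eqref{boundnessB} of $B$, and a perturbed Galerkin orthogonality.

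Fix arbitrary $\hat{\boldsymbol{\psi}}\in V_h$, $\hat r\in R_h$, $\hat{\boldsymbol{q}}\in Q_h$, and split each error as
\[
\boldsymbol{\phi}-\boldsymbol{\phi}_h=(\boldsymbol{\phi}-\hat{\boldsymbol{\psi}})+(\hat{\boldsymbol{\psi}}-\boldsymbol{\phi}_h),
\]
and similarly for $r$ and $\boldsymbol{p}$. The discrete parts $(\hat{\boldsymbol{\psi}}-\boldsymbol{\phi}_h,\hat r-r_h,\hat{\boldsymbol{q}}-\boldsymbol{p}_h)$ lie in $V_h\times R_h\times Q_h$, so Theorem \ref{stability} applies to them. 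For any test triple $(\boldsymbol{\psi},s,\boldsymbol{q})\in V_h\times R_h\times Q_h$ the discrete spaces are conforming: $V_h\subset V$, $R_h\subset R$, and $Q_h\subset Q$, since $Q_h\subset H_0(\operatorname{div};\Omega)$, which equals $Q$ for $\varepsilon>0$ by Lemma \ref{lemmain}, and $Q_h\subset H^1$ also gives the curl part for $\varepsilon=0$. Subtracting \eqref{stokes8.2} from \eqref{stokes6.2} then gives the perturbed orthogonality
\[
B(\boldsymbol{\phi}-\boldsymbol{\phi}_h,r-r_h,\boldsymbol{p}-\boldsymbol{p}_h;\boldsymbol{\psi},s,\boldsymbol{q})=(\nabla(w-w_h),\boldsymbol{\psi}).
\]
Consequently,
\[
B(\hat{\boldsymbol{\psi}}-\boldsymbol{\phi}_h,\hat r-r_h,\hat{\boldsymbol{q}}-\boldsymbol{p}_h;\boldsymbol{\psi},s,\boldsymbol{q})
= B(\hat{\boldsymbol{\psi}}-\boldsymbol{\phi},\hat r-r,\hat{\boldsymbol{q}}-\boldsymbol{p};\boldsymbol{\psi},s,\boldsymbol{q})+(\nabla(w-w_h),\boldsymbol{\psi}).
\]

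Next I bound the right-hand side. The first term is controlled by \eqref{boundnessB}, giving
\[
C\left(\|\boldsymbol{\phi}-\hat{\boldsymbol{\psi}}\|_\varepsilon+\|r-\hat r\|_0+\|\boldsymbol{p}-\hat{\boldsymbol{q}}\|_\varepsilon\right)\left(\|(\boldsymbol{\psi},s)\|_\varepsilon+\|\boldsymbol{q}\|_\varepsilon\right).
\]
For the consistency term I use $\|\boldsymbol{\psi}\|_0\le\|\boldsymbol{\psi}\|_\varepsilon$ from \eqref{Stokes1}, so $|(\nabla(w-w_h),\boldsymbol{\psi})|\le \|w-w_h\|_1\|\boldsymbol{\psi}\|_\varepsilon$, uniformly in $\varepsilon$. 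Dividing by $\|(\boldsymbol{\psi},s)\|_\varepsilon+\|\boldsymbol{q}\|_\varepsilon$, taking the supremum, and applying Theorem \ref{stability} on the left yields
\[
\|(\hat{\boldsymbol{\psi}}-\boldsymbol{\phi}_h,\hat r-r_h)\|_\varepsilon+\|\hat{\boldsymbol{q}}-\boldsymbol{p}_h\|_\varepsilon\le C\left(\|\boldsymbol{\phi}-\hat{\boldsymbol{\psi}}\|_\varepsilon+\|r-\hat r\|_0+\|\boldsymbol{p}-\hat{\boldsymbol{q}}\|_\varepsilon+\|w-w_h\|_1\right).
\]
The triangle inequality then bounds the full error by the same quantity, and taking the infimum over $\hat{\boldsymbol{\psi}},\hat r,\hat{\boldsymbol{q}}$ gives the claim.

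The only delicate point is the consistency step: the stability constant must be $\varepsilon$-independent, which is the content of Theorem \ref{stability}, and the norm on $\boldsymbol{p}$ in the boundedness estimate must be the same $\|\cdot\|_\varepsilon$ used in stability. For $\varepsilon=0$, the pairing $b_1$ must be interpreted via \eqref{Stokes9} on the discrete spaces. Since $Q_h\subset H(\operatorname{curl};\Omega)$, both forms of the pairing coincide there, so no additional nonconformity term arises.
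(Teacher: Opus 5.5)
Your proposal is correct and follows the paper's proof essentially step for step. Both arguments use the perturbed Galerkin orthogonality $B(\boldsymbol{\phi}-\boldsymbol{\phi}_h,r-r_h,\boldsymbol{p}-\boldsymbol{p}_h;\boldsymbol{\psi}_h,s_h,\boldsymbol{q}_h)=(\nabla(w-w_h),\boldsymbol{\psi}_h)$, split the error against an arbitrary discrete triple, apply Theorem \ref{stability} and \eqref{boundnessB}, and finish with the triangle inequality and an infimum. You additionally state explicitly the $\varepsilon$-uniform bound $|(\nabla(w-w_h),\boldsymbol{\psi})|\le\|w-w_h\|_1\|\boldsymbol{\psi}\|_\varepsilon$, which the paper uses only implicitly.
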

\begin{proof}
Since $ V_h\times R_h\times Q_h \subset  V\times R\times Q$, we can get
 \[B(\boldsymbol{\phi}-\boldsymbol{\phi}_h, r-r_h, \boldsymbol{p}-\boldsymbol{p}_h; \boldsymbol{\psi}_h, s_h, \boldsymbol{q}_h) = (\nabla(w-w_h), \boldsymbol{\psi}_h).\] 
		For $\forall \hat{\boldsymbol{\phi}} \in V_h$, $\hat{r} \in R_h$, $\hat{\boldsymbol{p}} \in Q_h$, we can obtain 
		\begin{equation}\label{disrete4}
	\begin{aligned}[b]
	B(\hat{\boldsymbol{\phi}}-\boldsymbol{\phi}_h, \hat{r}-r_h, \hat{\boldsymbol{p}}-\boldsymbol{p}_h; \boldsymbol{\psi}_h, s_h, \boldsymbol{q}_h) &=B(\hat{\boldsymbol{\phi}}-\boldsymbol{\phi}, \hat{r}-r, \hat{\boldsymbol{p}}-\boldsymbol{p}; \boldsymbol{\psi}_h, s_h, \boldsymbol{q}_h) \\
	&+ (\nabla(w-w_h), \boldsymbol{\psi}_h). 
		\end{aligned}
		\end{equation}
	By Theorem \ref{stability}, for $\tilde{\boldsymbol{\phi}} = \hat{\boldsymbol{\phi}}-\boldsymbol{\phi}_h$, $ \tilde{r}=\hat{r}-r_h$, $ \tilde{\boldsymbol{p}}=\hat{\boldsymbol{p}}-\boldsymbol{p}_h$, there exists $(\boldsymbol{\psi},s,\boldsymbol{q}) \in V_h\times R_h\times Q_h $, with\[
	\|(\boldsymbol{\psi},s)\|_\varepsilon+\|\boldsymbol{q}\|_\varepsilon \leq C(\|(\boldsymbol{\phi},r)\|_\varepsilon+\|\boldsymbol{p}\|_\varepsilon ),
	\] 
	such that 
\[
\|(\tilde{\boldsymbol{\phi}}, \tilde{r})\|_\varepsilon + \|\tilde{\boldsymbol{p}}\|_\varepsilon
\leq \frac{B(\tilde{\boldsymbol{\phi}}, \tilde{r}, \tilde{\boldsymbol{p}}; \boldsymbol{\psi}, s, \boldsymbol{q})}{\|(\boldsymbol{\psi}, s)\|_\varepsilon + \|\boldsymbol{q}\|_\varepsilon}.
\]
	By \eqref{disrete4} and \eqref{boundnessB} we get \begin{align*}
	B(\tilde{\boldsymbol{\phi}}, \tilde{r}, \tilde{\boldsymbol{p}}; \boldsymbol{\psi}, s, \boldsymbol{q})
	\leq &C\left( \|\hat{\boldsymbol{\phi}} - \boldsymbol{\phi}\|_\varepsilon + \|\hat{r} - r\|_0 + \|\hat{\boldsymbol{p}} - \boldsymbol{p}\|_\varepsilon + \|\nabla(w - w_h)\|_0 \right)\\
	&\cdot \left(\|(\boldsymbol{\psi},s)\|_\varepsilon+\|\boldsymbol{q}\|_\varepsilon\right).
	\end{align*}
Therefore 
\begin{equation}
  \begin{aligned}[b]
	\|\hat{\boldsymbol{\phi}}-\boldsymbol{\phi}_h\|_\varepsilon + \|\hat{r}-r_h\|_0+ \|\hat{\boldsymbol{p}}-\boldsymbol{p}_h\|_\varepsilon &\leq C_2 \left( \|\hat{\boldsymbol{\phi}}-\boldsymbol{\phi}\|_{\varepsilon} + \|\hat{r}-r\|_{0} + \|\hat{\boldsymbol{p}}-\boldsymbol{p}\|_{\varepsilon} \right) \\
&+ \|\nabla( w - w_h)\|_0 . \end{aligned}
\end{equation}
Through the triangle inequality, we obtain 
\begin{equation}
\begin{aligned}[b]
		&\|\boldsymbol{\phi}-\boldsymbol{\phi}_h\|_\varepsilon + \|r-r_h\|_0 + \|\boldsymbol{p}-\boldsymbol{p}_h\|_\varepsilon\\
		 &\leq \|\hat{\boldsymbol{\phi}}-\boldsymbol{\phi}\|_\varepsilon + \|\hat{r}-r\|_0 + \|\hat{\boldsymbol{p}}-\boldsymbol{p}\|_\varepsilon + \|\hat{\boldsymbol{\phi}}-\boldsymbol{\phi}_h\|_\varepsilon + \|\hat{r}-r_h\|_0 + \|\hat{\boldsymbol{p}}-\boldsymbol{p}_h\|_\varepsilon \\
		&\leq (1+C_2) \left( \|\hat{\boldsymbol{\phi}}-\boldsymbol{\phi}\|_{\varepsilon} + \|\hat{r}-r\|_{0} + \|\hat{\boldsymbol{p}}-\boldsymbol{p}\|_{\varepsilon} \right) + \| w - w_h\|_1 \\
		&\leq C \left( ( \inf_{\hat{\boldsymbol{\psi}} \in V_h} \|\boldsymbol{\phi} - \hat{\boldsymbol{\psi}}\|_\varepsilon + \inf_{\hat{r} \in R_h} \|r - \hat{r}\|_0+\inf_{\hat{\boldsymbol{q}} \in Q_h} \|\boldsymbol{p} - \hat{\boldsymbol{q}}\|_\varepsilon )+\|w-w_h\|_1\right). 
	\end{aligned}
	\end{equation}
	\end{proof}	
   Combining the above theorem with standard interpolation estimates and the fact that $w_h$ is the finite element approximation to $w$, we can now prove the \emph{a priori} error estimate.
	\begin{theorem} \label{norm}
		Let $(w, \boldsymbol{\phi},r,\boldsymbol{p}) \in U\times V \times R \times Q$ be the solution of problem \eqref{stokes6.1}-\eqref{stokes6.2},  and $(w_h, \boldsymbol{\phi}_h,r_h,\boldsymbol{p}_h) \in U_h\times V_h \times R_h \times Q_h$ be the solution of \eqref{stokes8.1}-\eqref{stokes8.2}. Assume $(w,\boldsymbol{\phi}, r, \boldsymbol{p}) \in H^2(\Omega) \times H^2(\Omega;\mathbb R^3) \times H^1(\Omega)\times H^2(\Omega;\mathbb R^3)$. We have
	\begin{equation}
	\|\boldsymbol{\phi} - \boldsymbol{\phi}_h\|_\varepsilon + \|r - r_h\|_0 + \|\boldsymbol{p} - \boldsymbol{p}_h\|_\varepsilon \leq C(\varepsilon h+h^2)\|\boldsymbol{\phi}\|_2+Ch\|r\|_1+Ch\|\boldsymbol{p}\|_2+Ch\|w\|_2.
	\end{equation}
	\end{theorem}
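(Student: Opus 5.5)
The plan is to start from the quasi-optimality bound of Theorem~\ref{priori}. It reduces the claim to four pieces: three best-approximation errors in $V_h$, $R_h$, $Q_h$, and the Galerkin error $\|w-w_h\|_1$. Each infimum will be bounded by inserting a concrete interpolant from Lemma~\ref{Lemma:Clement}.

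For $\boldsymbol{\phi}$ I take $\hat{\boldsymbol{\psi}}=I_h\boldsymbol{\phi}\in\overline{V}_h\subset V_h$, the vector Lagrange/Clément interpolant, which preserves the homogeneous boundary condition. Using $\|\boldsymbol{\phi}-I_h\boldsymbol{\phi}\|_0\lesssim h^2\|\boldsymbol{\phi}\|_2$ and $|\boldsymbol{\phi}-I_h\boldsymbol{\phi}|_1\lesssim h\|\boldsymbol{\phi}\|_2$ in the norm \eqref{Stokes1} gives
\[
\|\boldsymbol{\phi}-\hat{\boldsymbol{\psi}}\|_\varepsilon\lesssim(\varepsilon h+h^2)\|\boldsymbol{\phi}\|_2 .
\]
For $r$ I take $\hat r=\pi_h r$, so that $\|r-\pi_h r\|_0\lesssim h\|r\|_1$ by the $l=0$ case of Lemma~\ref{Lemma:Clement}. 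In fact $r\equiv0$ by Lemma~\ref{lemmazero}, so this term even vanishes. For $w$, $U_h$ consists of linear Lagrange elements and \eqref{stokes8.1} is the Galerkin projection of \eqref{stokes6.1}. Céa's lemma plus the interpolation estimate then give $\|w-w_h\|_1\lesssim h\|w\|_2$.

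The term I expect to be the main obstacle is $\inf_{\hat{\boldsymbol{q}}\in Q_h}\|\boldsymbol{p}-\hat{\boldsymbol{q}}\|_\varepsilon$. The norm \eqref{Stokes7} is a dual norm, and the interpolant must lie in $H_0(\operatorname{div};\Omega)\cap H^1$, so it needs vanishing normal component on $\partial\Omega$. First, I bound the dual part by the $L^2$ norm of the curl: for $\varepsilon>0$, \eqref{Stokes9} gives $\langle\boldsymbol{\psi},\operatorname{curl}\boldsymbol{e}\rangle=(\boldsymbol{\psi},\operatorname{curl}\boldsymbol{e})\le\|\boldsymbol{\psi}\|_0\|\operatorname{curl}\boldsymbol{e}\|_0\le\|\boldsymbol{\psi}\|_\varepsilon\|\operatorname{curl}\boldsymbol{e}\|_0$, and the same bound holds for $\varepsilon=0$. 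Hence
\[
\|\boldsymbol{e}\|_\varepsilon\le\|\operatorname{curl}\boldsymbol{e}\|_0+\|\operatorname{div}\boldsymbol{e}\|_0\lesssim|\boldsymbol{e}|_1 .
\]
Second, I take $\hat{\boldsymbol{q}}$ to be a Scott--Zhang type vector $P_1$ interpolant of $\boldsymbol{p}$ that preserves $\boldsymbol{p}\cdot\boldsymbol{n}=0$ on each boundary face. On a polyhedron this is done by constraining the normal components of boundary nodal values; at edges and vertices where faces meet, all components along the normals of the incident faces vanish. For smooth $\boldsymbol{p}\in H^2$ with $\boldsymbol{p}\cdot\boldsymbol{n}=0$ this constraint is satisfied by the nodal values, so the standard estimate $|\boldsymbol{p}-\hat{\boldsymbol{q}}|_1\lesssim h\|\boldsymbol{p}\|_2$ persists. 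If that construction proves delicate, I would instead cite a known boundary-condition-preserving interpolant. Either way the result is $\|\boldsymbol{p}-\hat{\boldsymbol{q}}\|_\varepsilon\lesssim h\|\boldsymbol{p}\|_2$.

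Summing the four bounds in Theorem~\ref{priori} yields
\[
\|\boldsymbol{\phi}-\boldsymbol{\phi}_h\|_\varepsilon+\|r-r_h\|_0+\|\boldsymbol{p}-\boldsymbol{p}_h\|_\varepsilon\le C(\varepsilon h+h^2)\|\boldsymbol{\phi}\|_2+Ch\|r\|_1+Ch\|\boldsymbol{p}\|_2+Ch\|w\|_2,
\]
with $C$ independent of $h$ and $\varepsilon$. This is the claimed estimate.
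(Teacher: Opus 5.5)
Your proposal is correct and follows the same route as the paper. You insert interpolants into the quasi-optimality bound of Theorem~\ref{priori}, bound the dual part of $\|\cdot\|_\varepsilon$ for $\boldsymbol{p}$ by $\|\operatorname{curl}(\cdot)\|_0$, and use the Galerkin estimate $\|w-w_h\|_1\lesssim h\|w\|_2$; you are also more careful than the paper, which uses $I_h r$ and $I_h\boldsymbol{p}$ without checking that they lie in $R_h$ and $Q_h$, and for $Q_h$ the cleanest version of the justification you hint at is that $H^2\subset C^0(\overline{\Omega})$ in three dimensions, so the nodal Lagrange interpolant of $\boldsymbol{p}$ is defined, its nodal values satisfy $\boldsymbol{p}\cdot\boldsymbol{n}_F=0$ for every incident boundary face $F$, and it therefore lies in $Q_h$ with $|\boldsymbol{p}-\hat{\boldsymbol{q}}|_1\lesssim h|\boldsymbol{p}|_2$.
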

	\begin{proof} 
		By the standard interpolation error, we know
	\begin{equation}
	\inf_{\hat{\boldsymbol{\psi}} \in V_h} \|\boldsymbol{\phi} - \hat{\boldsymbol{\psi}}\|_\varepsilon \leq \|\boldsymbol{\phi}-I_h\boldsymbol{\phi}\|_\varepsilon,
\end{equation}
	and 
	\begin{equation}
	\begin{aligned}[b]
	\|\boldsymbol{\phi}-I_h\boldsymbol{\phi}\|_{\varepsilon}^2&=\varepsilon^2\|\nabla (\boldsymbol{\phi}-I_h\boldsymbol{\phi})\|_0^2+\|\boldsymbol{\phi}-I_h\boldsymbol{\phi}\|_0^2\\
	&\leq \varepsilon^2(C_1h|\boldsymbol{\phi}|_2))^2+(C_2h^2\|\boldsymbol{\phi}\|_2)^2\\
	&\leq C_0(\varepsilon^2h^2+h^4)\|\boldsymbol{\phi}\|_2^2. 
	\end{aligned}
	\end{equation}
Therefore \begin{equation}\label{phi}
\inf_{\hat{\boldsymbol{\psi}} \in V_h} \|\boldsymbol{\phi} - \hat{\boldsymbol{\psi}}\|_\varepsilon \leq C(\varepsilon h+h^2)\|\boldsymbol{\phi}\|_2. 
\end{equation}
Similarly	\begin{equation}\label{discrete5}
\inf_{\hat{r} \in R_h} \|r - \hat{r}\|_0 \leq \|r-I_hr\|_0 \leq Ch\|r\|_1=0, \end{equation}
\begin{equation}\label{discrete6}
\inf_{\hat{\boldsymbol{q}} \in Q_h} \|\boldsymbol{p} - \hat{\boldsymbol{q}}\|_\varepsilon \leq \|\boldsymbol{p} - I_h\boldsymbol{p}\|_\varepsilon ,
\end{equation}
and 
\begin{equation}
\begin{aligned}[b]
	\|\boldsymbol{p} - I_h\boldsymbol{p}\|_\varepsilon^2 &= \left( \sup_{\boldsymbol{\psi} \in V} \frac{\langle \boldsymbol{\psi}, \operatorname{curl} (\boldsymbol{p} - I_h\boldsymbol{p}) \rangle}{\|\boldsymbol{\psi}\|_\varepsilon} \right)^2 + \|\operatorname{div} (\boldsymbol{p} - I_h\boldsymbol{p})\|_0^2\\
	&\leq \|\operatorname{curl} (\boldsymbol{p} - I_h\boldsymbol{p})\|_0^2+\|\operatorname{div} (\boldsymbol{p} - I_h\boldsymbol{p})\|_0^2\\
	&\leq C_3h^2\|\boldsymbol{p}\|_2^2 .
	\end{aligned}
	\end{equation}
	Combining the above estimates, we obtain\begin{equation}\label{discrete7}
	\inf_{\hat{\boldsymbol{q}} \in Q_h} \|\boldsymbol{p} - \hat{\boldsymbol{q}}\|_\varepsilon \leq Ch\|\boldsymbol{p}\|_2 .
	\end{equation}
Consequently, the desired result follows from \eqref{phi}, \eqref{discrete5}, and \eqref{discrete7}, together with the fact that $w_h$ is the finite element approximation to $w$.
	\end{proof}
	
	\begin{theorem}\label{Th:errorU}
			Let $(w,\boldsymbol{\phi},r,\boldsymbol{p},u) $ be the solution of problem \eqref{stokes6.1}-\eqref{stokes6.3},  $u_h$ is the solution of \eqref{stokes8.3}. Under the assumption of Theorem \ref{norm}, We have\[
	|u - u_h|_1\lesssim h \left( |u|_2 + \|\boldsymbol{\phi}\|_2 + \|r\|_1 + \|\boldsymbol{p}\|_2 + \|w\|_2 \right).
	\]
	\end{theorem}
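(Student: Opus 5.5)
The natural route is a Strang-type splitting of the error through an auxiliary discrete problem. Let $\tilde u_h\in U_h$ be the Galerkin projection of $u$, defined by $(\nabla \tilde u_h,\nabla\xi_h)=(\nabla u,\nabla\xi_h)=(\boldsymbol{\phi},\nabla\xi_h)$ for all $\xi_h\in U_h$. The second equality holds by \eqref{stokes6.3}, since $U_h\subset H_0^1(\Omega)$. We then write $u-u_h=(u-\tilde u_h)+(\tilde u_h-u_h)$ and estimate each piece in the $H^1$ seminorm.

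For the first piece, C\'ea's lemma for the Poisson problem gives $|u-\tilde u_h|_1\le |u-I_hu|_1$. Lemma \ref{Lemma:Clement} with $k=2$, $s=1$ then yields $|u-\tilde u_h|_1\lesssim h|u|_2$.

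For the second piece, subtract \eqref{stokes8.3} from the defining relation of $\tilde u_h$:
\[
(\nabla(\tilde u_h-u_h),\nabla\xi_h)=(\boldsymbol{\phi}-\boldsymbol{\phi}_h,\nabla\xi_h)\quad\forall \xi_h\in U_h .
\]
Taking $\xi_h=\tilde u_h-u_h$ and applying Cauchy--Schwarz gives $|\tilde u_h-u_h|_1\le \|\boldsymbol{\phi}-\boldsymbol{\phi}_h\|_0$. By \eqref{Stokes1}, $\|\boldsymbol{\phi}-\boldsymbol{\phi}_h\|_0\le\|\boldsymbol{\phi}-\boldsymbol{\phi}_h\|_\varepsilon$. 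Theorem \ref{norm} then bounds this by $C(\varepsilon h+h^2)\|\boldsymbol{\phi}\|_2+Ch(\|r\|_1+\|\boldsymbol{p}\|_2+\|w\|_2)$.

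Since $0\le\varepsilon\le C$ and $h$ is bounded (as $\Omega$ is bounded), $\varepsilon h+h^2\lesssim h$. Combining both pieces with the triangle inequality gives the stated estimate.

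The only delicate point is the regularity of $u$ needed for the interpolation step: we must know that $u\in H^2(\Omega)$, which we simply take from the assumed $|u|_2<\infty$. We should also check that the $L^2$ part of $\|\cdot\|_\varepsilon$ controls the right-hand side uniformly in $\varepsilon$. It does, because the estimate uses only $\|\boldsymbol{\phi}-\boldsymbol{\phi}_h\|_0$ and never $\varepsilon\|\nabla(\boldsymbol{\phi}-\boldsymbol{\phi}_h)\|_0$. This is exactly why the bound is robust in $\varepsilon$.
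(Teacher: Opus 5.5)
Your proof is correct and is essentially the paper's argument. The paper takes an arbitrary $\hat u\in U_h$ and tests the error equation with $\hat u-u_h$ to get $|\hat u-u_h|_1\le C(\|\boldsymbol{\phi}-\boldsymbol{\phi}_h\|_0+|\hat u-u|_1)$; it then uses the triangle inequality, interpolation, $\|\cdot\|_0\le\|\cdot\|_\varepsilon$ and Theorem \ref{norm}, as you do. Your choice of the Galerkin projection only removes the extra term $|\hat u-u|_1$ by orthogonality, and your remark that $\varepsilon h+h^2\lesssim h$ for bounded $\varepsilon$ makes explicit a step the paper leaves implicit.
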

	\begin{proof} 
		It follows from \eqref{stokes6.3} that
	\[
		(\nabla (u - u_h), \nabla \xi_h) = (\boldsymbol{\phi} - \boldsymbol{\phi}_h, \nabla \xi_h) \quad \forall \xi_h \in V_h.\]
		Taking $\xi_h=\hat{u}-u_h$, where $\hat{u}\in V_h$ is arbitrary, gives
        \[(\nabla (\hat{u} - u_h), \nabla (\hat{u} - u_h)) = (\boldsymbol{\phi} - \boldsymbol{\phi}_h, \nabla (\hat{u} - u_h)) + (\nabla (\hat{u} - u), \nabla (\hat{u} - u_h)).\]
        Hence
		\begin{equation}\label{include}
		 |\hat{u} - u_h|_1 \leq C_3 \left( \|\boldsymbol{\phi} - \boldsymbol{\phi}_h\|_0 + |\hat{u} - u|_1 \right).		 
		  \end{equation}
By the triangle inequality, we obtain
\begin{equation}
\begin{aligned}[b]
		|u - u_h|_1 &\leq |u - \hat{u}|_1 + |\hat{u} - u_h|_1 \\
		& \leq (1 + C_3) |u - \hat{u}|_1 + C_3 \|\boldsymbol{\phi} - \boldsymbol{\phi}_h\|_0 \\
		& \lesssim h |u|_2 + \|\boldsymbol{\phi} - \boldsymbol{\phi}_h\|_0 \\
		& \lesssim h \left( |u|_2 + \|\boldsymbol{\phi}\|_2 + \|r\|_1 + \|\boldsymbol{p}\|_2 + \|w\|_2 \right).
	\end{aligned}
	\end{equation}
	\end{proof}
	
	\begin{remark}
		 In this paper, $u$ and $w$ are approximated using linear elements, so we directly use the fact $\|\boldsymbol{\phi}-\boldsymbol{\phi}_h\|_0 \leq \|\boldsymbol{\phi}-\boldsymbol{\phi}_h\|_\varepsilon$. However, if quadratic elements are used to approximate $u$ and $w$, then $\|\boldsymbol{\phi}-\boldsymbol{\phi}_h\|_0$ needs to be handled via duality estimates, which is omitted here due to space limitations. 
	\end{remark}
We next investigate the error estimate in the limiting case as $\varepsilon \to 0$. To this end, by setting $\varepsilon= 0$ in \eqref{equation1}, the original problem reduces to the following Poisson problem:
\begin{equation}\label{poisson}
	\begin{cases} 
	-\Delta u^0 = f & \text{in } \Omega, \\ 
				u^0 = 0 & \text{on } \partial\Omega.
	\end{cases}
\end{equation}
In analogy with \cite{cui2025loworderfiniteelementcomplex}, we make the following assumption:
Assume that \eqref{equation1} and \eqref{poisson} satisfy the regularity \begin{equation}\label{hypothesis1}
	\begin{aligned}
	&\|u^0\|_2 = \|w\|_2 \lesssim \|f\|_0,\\
	&\|u - u^0\|_1 + \varepsilon \|u\|_2 + \varepsilon^2 \|u\|_3 \lesssim \varepsilon^{1/2} \|f\|_0.
	\end{aligned}
\end{equation}
Under above estimate, we have \cite{cui2025loworderfiniteelementcomplex}
\begin{subequations}\label{hypothesis2}
\begin{equation}\label{hypothesis2.1}
	\|\boldsymbol{\phi}^0\|_1 \lesssim \|f\|_0,
	\end{equation}
	\begin{equation}\label{hypothesis2.2}
		\|\boldsymbol{\phi} - \boldsymbol{\phi}^0\|_0 + \varepsilon \|\boldsymbol{\phi}\|_1 + \varepsilon^2 \|\boldsymbol{\phi}\|_2 \lesssim \varepsilon^{1/2} \|f\|_0,
		\end{equation}
		\begin{equation}\label{hypothesis2.3}
			\|\operatorname{curl } \boldsymbol{p}\|_0 \lesssim \varepsilon^{1/2} \|f\|_0,
			\end{equation}
\end{subequations}
where $\boldsymbol{\phi}^0=\nabla u$. 

With the above assumption and regularity estimates, we are now in a position to state our main results.
	\begin{theorem} \label{conclusion}
		 	Let $(\boldsymbol{\phi},r,\boldsymbol{p}) \in V \times R \times Q$ be the solution of problem \eqref{stokes6.2},  and $( \boldsymbol{\phi}_h,r_h,\boldsymbol{p}_h) \in U_h \times V_h \times R_h \times Q_h$ be the solution of \eqref{stokes8.2}. Under the above assumptions, we have
		 	\begin{equation}\label{layer}
		 		\|\boldsymbol{\phi} - \boldsymbol{\phi}_h\|_{\varepsilon} + \|r - r_h\|_0 + \|\boldsymbol{p} - \boldsymbol{p}_h\|_{\varepsilon} \leq Ch^{1/2}\|f\|_0,
		 	\end{equation}
         where C is a constant independent of \( \varepsilon \), \( h \) and \( f \).   

	\end{theorem}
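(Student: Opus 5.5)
The plan is to start from the quasi-optimality estimate of Theorem \ref{priori}:
\[
\|\boldsymbol{\phi} - \boldsymbol{\phi}_h\|_\varepsilon + \|r - r_h\|_0 + \|\boldsymbol{p} - \boldsymbol{p}_h\|_\varepsilon
\lesssim \inf_{\hat{\boldsymbol{\psi}} \in V_h} \|\boldsymbol{\phi} - \hat{\boldsymbol{\psi}}\|_\varepsilon
+ \inf_{\hat{r} \in R_h} \|r - \hat{r}\|_0
+ \inf_{\hat{\boldsymbol{q}} \in Q_h} \|\boldsymbol{p} - \hat{\boldsymbol{q}}\|_\varepsilon + \|w - w_h\|_1 .
\]
Each of the four terms will be bounded separately by $h^{1/2}\|f\|_0$. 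The norms used must be independent of $\varepsilon$, so I will not use Theorem \ref{norm}. That result carries $\|\boldsymbol{\phi}\|_2$, which may blow up like $\varepsilon^{-3/2}$.

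\textbf{The easy terms.} Since $r\equiv 0$ by Lemma \ref{lemmazero}, the $r$-term vanishes (take $\hat r=0$). For $w$, the standard linear finite element estimate together with \eqref{hypothesis1} gives $\|w-w_h\|_1\lesssim h\|w\|_2\lesssim h\|f\|_0\le C h^{1/2}\|f\|_0$. For $\boldsymbol{p}$, I take $\hat{\boldsymbol{q}}=0$ as the crudest choice. Note that $\operatorname{div}\boldsymbol{p}=0$. By the definition \eqref{Stokes7}, $\|\boldsymbol{p}\|_\varepsilon \le \|\operatorname{curl}\boldsymbol{p}\|_0$, because $\langle\boldsymbol{\psi},\operatorname{curl}\boldsymbol{p}\rangle\le \|\boldsymbol{\psi}\|_0\|\operatorname{curl}\boldsymbol{p}\|_0\le\|\boldsymbol{\psi}\|_\varepsilon\|\operatorname{curl}\boldsymbol{p}\|_0$. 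Then \eqref{hypothesis2.3} gives $\varepsilon^{1/2}\|f\|_0$. This is fine when $\varepsilon\lesssim h$. When $\varepsilon> h$, I will instead use the interpolant $I_h\boldsymbol{p}$ and the estimate from Theorem \ref{norm}, provided $\|\boldsymbol{p}\|_2$ can be controlled in terms of $\varepsilon$. If that regularity is not available, I will rely on the $\varepsilon\lesssim h$ regime plus a best-approximation bound in $H(\operatorname{curl},\operatorname{div})$. This point may need an extra regularity assumption on $\boldsymbol{p}$, analogous to \eqref{hypothesis2}.

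\textbf{The $\boldsymbol{\phi}$-term and the two regimes.} This is the main step, done by interpolating $\boldsymbol{\phi}$ with the Clément/Scott–Zhang operator $I_h$ into $\overline V_h\subset V_h$, splitting into the regimes $\varepsilon\ge h$ and $\varepsilon<h$.

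For $\varepsilon\ge h$, use $\|\boldsymbol{\phi}-I_h\boldsymbol{\phi}\|_0\lesssim h\|\boldsymbol{\phi}\|_1$ and $\varepsilon\|\nabla(\boldsymbol{\phi}-I_h\boldsymbol{\phi})\|_0\lesssim \varepsilon h\|\boldsymbol{\phi}\|_2$. By \eqref{hypothesis2.2}, these are bounded by $h\varepsilon^{-1/2}\|f\|_0$ and $h\varepsilon^{-1/2}\|f\|_0$ respectively, and both are $\le h^{1/2}\|f\|_0$ since $\varepsilon\ge h$. Alternatively, interpolate between the $H^1$ and $H^2$ bounds to get $\varepsilon\|\nabla(\boldsymbol{\phi}-I_h\boldsymbol{\phi})\|_0\lesssim \varepsilon\|\boldsymbol{\phi}\|_1\lesssim\varepsilon^{1/2}\|f\|_0$, whichever is smaller.

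For $\varepsilon<h$, split $\boldsymbol{\phi}=\boldsymbol{\phi}^0+(\boldsymbol{\phi}-\boldsymbol{\phi}^0)$. Then
\[
\|\boldsymbol{\phi}-I_h\boldsymbol{\phi}\|_0\le\|(I-I_h)\boldsymbol{\phi}^0\|_0+\|(I-I_h)(\boldsymbol{\phi}-\boldsymbol{\phi}^0)\|_0\lesssim h\|\boldsymbol{\phi}^0\|_1+\varepsilon^{1/2}\|f\|_0 ,
\]
using \eqref{hypothesis2.1}, \eqref{hypothesis2.2}, and $L^2$-stability of $I_h$. Here the boundary trace of $\boldsymbol{\phi}^0$ must be handled by a variant of $I_h$ that is $L^2$-stable on $H^1$ without boundary conditions, or by cutting off near $\partial\Omega$; this is a technical obstacle I expect to address with an $O(h)$-strip argument that yields exactly $h^{1/2}$. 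The gradient term is bounded using $H^1$-stability of $I_h$ as $\varepsilon\|\nabla(\boldsymbol{\phi}-I_h\boldsymbol{\phi})\|_0\lesssim\varepsilon\|\boldsymbol{\phi}\|_1\lesssim\varepsilon^{1/2}\|f\|_0\le h^{1/2}\|f\|_0$.

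\textbf{Conclusion and main obstacle.} Summing the four bounds in both regimes gives \eqref{layer}. The hardest part will be the uniform treatment of $\boldsymbol{p}$ when $\varepsilon> h$, together with the boundary layer of $\boldsymbol{\phi}$ in the small-$\varepsilon$ regime.
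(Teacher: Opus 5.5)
Your overall architecture matches the paper's. You use quasi-optimality from Theorem~\ref{priori}, $r\equiv 0$ by Lemma~\ref{lemmazero}, $\|w-w_h\|_1\lesssim h\|f\|_0$, the splitting $\boldsymbol{\phi}=\boldsymbol{\phi}^0+(\boldsymbol{\phi}-\boldsymbol{\phi}^0)$, and $\hat{\boldsymbol{q}}=\boldsymbol{0}$ for $\boldsymbol{p}$.

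The genuine gap is the one you flag yourself: the $\boldsymbol{p}$-term when $\varepsilon\gtrsim h$.
\begin{itemize}
\item With only \eqref{hypothesis1}--\eqref{hypothesis2}, the choice $\hat{\boldsymbol{q}}=\boldsymbol{0}$ gives $\|\boldsymbol{p}\|_\varepsilon\le\|\operatorname{curl}\boldsymbol{p}\|_0\lesssim\varepsilon^{1/2}\|f\|_0$. This is not $O(h^{1/2})$ once $\varepsilon\gg h$.
\item Any nonzero $\hat{\boldsymbol{q}}\in Q_h$ must also control $\|\operatorname{div}\hat{\boldsymbol{q}}\|_0=\|\operatorname{div}(\boldsymbol{p}-\hat{\boldsymbol{q}})\|_0$. $H^1$-regularity of $\boldsymbol{p}$ alone yields no rate for this term.
\item Theorem~\ref{norm} needs $\|\boldsymbol{p}\|_2$, and no $\varepsilon$-explicit bound on it is assumed.
\end{itemize}
So as written your argument proves \eqref{layer} only for $\varepsilon\lesssim h$.

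The paper's proof has exactly the same hole: it bounds $\boldsymbol{p}$ only ``if $\varepsilon\ll h$'' and then states the conclusion for all $\varepsilon$. Your own fallback does close it, under an extra hypothesis with the boundary-layer scaling of \eqref{hypothesis2.2}, e.g.\ $\|\boldsymbol{p}\|_2\lesssim\varepsilon^{-1/2}\|f\|_0$. Then for $\varepsilon\ge h$, an interpolant into $Q_h$ preserving $\boldsymbol{q}\cdot\boldsymbol{n}=0$ gives $\|\boldsymbol{p}-I_h\boldsymbol{p}\|_\varepsilon\lesssim\|\operatorname{curl}(\boldsymbol{p}-I_h\boldsymbol{p})\|_0+\|\operatorname{div}(\boldsymbol{p}-I_h\boldsymbol{p})\|_0\lesssim h\|\boldsymbol{p}\|_2\lesssim h\varepsilon^{-1/2}\|f\|_0\le h^{1/2}\|f\|_0$. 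Your bound $\|\boldsymbol{p}\|_\varepsilon\le\|\operatorname{curl}\boldsymbol{p}\|_0$, via $(\operatorname{curl}\boldsymbol{\psi},\boldsymbol{p})=(\boldsymbol{\psi},\operatorname{curl}\boldsymbol{p})$, is also the correct version of the paper's chain. The paper's intermediate step $\|\boldsymbol{p}\|_\varepsilon\le\|\boldsymbol{p}\|_0$ does not follow from \eqref{Stokes7}.

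For $\boldsymbol{\phi}$ your route differs from the paper's, and the difference matters. The paper avoids a case split by writing each error as $\|e\|=\|e\|^{1/2}\|e\|^{1/2}$, using stability in one factor and approximation in the other. In particular it bounds $\|(I-I_h)(\boldsymbol{\phi}-\boldsymbol{\phi}^0)\|_0$ by $\|\boldsymbol{\phi}-\boldsymbol{\phi}^0\|_0^{1/2}(h\|\boldsymbol{\phi}-\boldsymbol{\phi}^0\|_1)^{1/2}$, and $\|(I-I_h)\boldsymbol{\phi}^0\|_0$ by $h\|\boldsymbol{\phi}^0\|_1$.

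Both fields have nonzero normal trace. No single linear operator is simultaneously $L^2$-stable, maps $H_0^1$ into $\overline{V}_h$, and is first-order accurate in $L^2$ on such fields. By density, an $L^2$-stable operator preserving zero traces sends every field, e.g.\ a nonzero constant, to a zero-trace function, so its error is $\gtrsim h^{1/2}$. With this honest $O(h^{1/2})$ accuracy, the paper's product bound for $\boldsymbol{\phi}-\boldsymbol{\phi}^0$ yields only $h^{1/4}$.

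Your regime split sidesteps this.
\begin{itemize}
\item For $\varepsilon\ge h$ you only approximate $\boldsymbol{\phi}\in H_0^1\cap H^2$ itself: $h\|\boldsymbol{\phi}\|_1+\varepsilon h\|\boldsymbol{\phi}\|_2\lesssim h\varepsilon^{-1/2}\|f\|_0$.
\item For $\varepsilon<h$ you need only three things: $L^2$-stability on $\boldsymbol{\phi}-\boldsymbol{\phi}^0$, $H^1$-stability on $\boldsymbol{\phi}$, and the $h^{1/2}$ boundary-strip bound for $\boldsymbol{\phi}^0$.
\end{itemize}
All of these hold for a Cl\'ement operator with boundary nodal values set to zero. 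Scott--Zhang would not do, since it is not $L^2$-stable. So the case distinction is doing real work here, and your $\boldsymbol{\phi}$-estimate is the more careful one.
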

	\begin{proof}
	  We first show that
	\begin{equation}
	\qquad \inf_{v \in W_h} \| \boldsymbol{\phi} - v \|_{\varepsilon} \leq \| \boldsymbol{\phi} - I_h \boldsymbol{\phi} \|_{\varepsilon} \leq C h^{1/2} \| f \|_0.
	\end{equation}
	Let $I_h\boldsymbol{\phi}\in V_h$ denote the Clément-Scott-Zhang interpolant of $\boldsymbol{\phi}$ introduced in Lemma \ref{Lemma:Clement}. We then obtain
	\begin{equation}
	\begin{aligned}[b]
		\varepsilon \| \boldsymbol{\phi} - I_h \boldsymbol{\phi} \|_1 &\leq C \varepsilon \| \boldsymbol{\phi} \|_1^{1/2} \| \boldsymbol{\phi} - I_h \boldsymbol{\phi} \|_1^{1/2} \\
		&\leq C \varepsilon h^{1/2} \| \boldsymbol{\phi} \|_1^{1/2} |\boldsymbol{\phi}|_2^{1/2} \\
		&\leq C h^{1/2} \| f\|_0. 
	\end{aligned}
	\end{equation}
	In order to estimate the $L^2$-part of the energy norm, we use the triangle inequality to obtain
	\begin{equation}
	\| \boldsymbol{\phi} - I_h \boldsymbol{\phi} \|_0 \leq \| \boldsymbol{\phi} - \boldsymbol{\phi}^0 - I_h(\boldsymbol{\phi} - \boldsymbol{\phi}^0) \|_0 + \| \boldsymbol{\phi}^0 - I_h \boldsymbol{\phi}^0 \|_0.
	\end{equation}
	By Lemma \ref{Lemma:Clement}, \eqref{hypothesis2.1} and \eqref{hypothesis2.2} we have
	\begin{equation}
	\begin{aligned}[b]
    \|\boldsymbol{\phi}-\boldsymbol{\phi}^0-I_h(\boldsymbol{\phi}-\boldsymbol{\phi}^0)\|_0 &=\|\boldsymbol{\phi}-\boldsymbol{\phi}^0-I_h(\boldsymbol{\phi}-\boldsymbol{\phi}^0)\|_0^{1/2}\|\boldsymbol{\phi}-\boldsymbol{\phi}^0-I_h(\boldsymbol{\phi}-\boldsymbol{\phi}^0)\|_0^{1/2}\\
    &\leq \|\boldsymbol{\phi}-\boldsymbol{\phi}^0\|_0^{1/2}Ch^{1/2} \|\boldsymbol{\phi}-\boldsymbol{\phi}^0\|_1^{1/2}  \\
    &\leq C\varepsilon^{1/2}h^{1/2}\|f\|_0^{1/2}\|\boldsymbol{\phi}-\boldsymbol{\phi}^0\|_1^{1/2}\\
    &\leq Ch^{1/2}\|f\|_0, 
\end{aligned}
    \end{equation}
and 
    	\begin{equation}
    \|\boldsymbol{\phi}^0-I_h\boldsymbol{\phi}^0\|_0 \leq C h\|\boldsymbol{\phi}^0\|_1 \leq Ch\|f\|_0.
		\end{equation}
	From the above estimates and Theorem \ref{priori} we get
	\begin{equation}
	\|\boldsymbol{\phi}-I_h\boldsymbol{\phi}\|_{\varepsilon} \leq C h^{1/2} \|f\|_0 +C h^{1/2} \|f\|_0+Ch\|f\|_0\leq Ch^{1/2} \|f\|_0.
	\end{equation}
	Clearly, $\boldsymbol{0} \in Q_h$, so we have 
	\begin{equation}
	\inf_{\hat{\boldsymbol{q}} \in Q_h} \| \boldsymbol{p} - \hat{\boldsymbol{q}} \|_\varepsilon \leq \| \boldsymbol{p} \|_\varepsilon. 
	\end{equation}
	For $\varepsilon >0 $, by \eqref{Stokes7}, \eqref{Stokes9}, \eqref{hypothesis2.3}, \cite[Lemma\ 3.6]{Vivette1986finite} and $\mathrm{div}\, \boldsymbol{p}=0$, we get
	\begin{equation}
	\| \boldsymbol{p} \|_\varepsilon \leq \| \boldsymbol{p} \|_0 \leq C \| \mathrm{curl}\, \boldsymbol{p} \|_0 \leq C \varepsilon^{\frac{1}{2}} \| f \|_0. 
	\end{equation}
	Therefore, if $\varepsilon \ll h$, we have
	\begin{equation}
	\inf_{\hat{\boldsymbol{q}} \in Q_h} \| \boldsymbol{p} - \hat{\boldsymbol{q}} \|_\varepsilon \leq C \varepsilon^{\frac{1}{2}} \| f \|_0 \leq C h^{\frac{1}{2}} \| f \|_0. 	
	\end{equation}
	In conclusion, \begin{equation}
		\|\boldsymbol{\phi} - \boldsymbol{\phi}_h\|_{\varepsilon} + \|r - r_h\|_0 + \|\boldsymbol{p} - \boldsymbol{p}_h\|_{\varepsilon} \leq Ch^{1/2} \| f \|_0. 
	\end{equation}
	\end{proof}
    
The following theorem is an immediate consequence of \eqref{include}, \eqref{layer}, and the triangle inequality.
\begin{theorem}\label{Th:errULayer}
Let $(w,\boldsymbol{\phi},r,\boldsymbol{p},u) $ be the solution of problem \eqref{stokes6.1}-\eqref{stokes6.3} and $u_h$ the solution of \eqref{stokes8.3}. Under the assumption of Theorem \ref{conclusion}, we have
\begin{equation}
|u-u_h|_1 \leq Ch^{1/2}\|f\|_0. 
\end{equation}
\end{theorem}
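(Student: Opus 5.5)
The plan is to derive the estimate from the stability bound \eqref{include} established in the proof of Theorem \ref{Th:errorU}, combined with the robust estimate \eqref{layer} of Theorem \ref{conclusion}. Recall that \eqref{include} states that for arbitrary $\hat u\in U_h$,
\[
|\hat u-u_h|_1\le C_3\left(\|\boldsymbol{\phi}-\boldsymbol{\phi}_h\|_0+|\hat u-u|_1\right).
\]
By the triangle inequality, this gives
\[
|u-u_h|_1\le (1+C_3)\inf_{\hat u\in U_h}|u-\hat u|_1+C_3\|\boldsymbol{\phi}-\boldsymbol{\phi}_h\|_0 .
\]
The second term is controlled directly: $\|\boldsymbol{\phi}-\boldsymbol{\phi}_h\|_0\le\|\boldsymbol{\phi}-\boldsymbol{\phi}_h\|_\varepsilon\le Ch^{1/2}\|f\|_0$ by \eqref{layer}.

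The main task is therefore to bound the best approximation error $\inf_{\hat u\in U_h}|u-\hat u|_1$ by $Ch^{1/2}\|f\|_0$ uniformly in $\varepsilon$. The bound $h|u|_2$ is not usable, since $\|u\|_2\lesssim\varepsilon^{-1/2}\|f\|_0$ blows up. Instead I would mimic the argument for $\boldsymbol{\phi}$ in Theorem \ref{conclusion} and split $u=(u-u^0)+u^0$, taking $\hat u=I_hu$. For the smooth part, Lemma \ref{Lemma:Clement} and \eqref{hypothesis1} give
\[
|u^0-I_hu^0|_1\lesssim h\|u^0\|_2\lesssim h\|f\|_0 .
\]
For the layer part, I would interpolate between the $H^1$ stability and the $H^2$ approximation of $I_h$:
\[
|(u-u^0)-I_h(u-u^0)|_1\lesssim \|u-u^0\|_1^{1/2}\left(h\|u-u^0\|_2\right)^{1/2}.
\]
By \eqref{hypothesis1}, $\|u-u^0\|_1\lesssim\varepsilon^{1/2}\|f\|_0$ and $\|u-u^0\|_2\le\|u\|_2+\|u^0\|_2\lesssim(\varepsilon^{-1/2}+1)\|f\|_0$. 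The product is then $\lesssim h^{1/2}(1+\varepsilon^{1/2})\|f\|_0\lesssim h^{1/2}\|f\|_0$, using $\varepsilon\le C$.

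This interpolation step, and in particular the use of the regularity hypothesis \eqref{hypothesis1} to balance the $\varepsilon$ powers, is the only step requiring care. An alternative is the simpler route $\min\{\|u-u^0\|_1,\;h\|u-u^0\|_2\}\lesssim\min\{\varepsilon^{1/2},h\varepsilon^{-1/2}\}\|f\|_0\le h^{1/2}\|f\|_0$, obtained by considering the cases $\varepsilon\le h$ and $\varepsilon>h$ separately.

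Combining the three bounds yields $|u-u_h|_1\le Ch^{1/2}\|f\|_0$ with $C$ independent of $\varepsilon$ and $h$.
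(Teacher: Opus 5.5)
Your proposal is correct and follows the same route as the paper, which simply invokes \eqref{include}, \eqref{layer} and the triangle inequality. The one ingredient you make explicit, and which the paper's one-line argument leaves unstated, is the $\varepsilon$-uniform bound $\inf_{\hat u\in U_h}|u-\hat u|_1\lesssim h^{1/2}\|f\|_0$, obtained from the splitting $u=(u-u^0)+u^0$ together with \eqref{hypothesis1}; this step is genuinely needed, because the bound $h|u|_2$ used in Theorem \ref{Th:errorU} degenerates like $h\varepsilon^{-1/2}\|f\|_0$.
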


\begin{remark}
It should be emphasized that our method is computationally inexpensive, especially compared to constructing complex three-dimensional $C^1$-conforming elements. 
The decomposed problems consist of two Poisson equations and one Brinkman-type equation, which can be solved using standard finite elements and fast solvers. 
Additionally, the newly introduced variable $\boldsymbol\phi$  corresponds to the gradient of the original variable $u$ and the $\varepsilon$-norm of $\boldsymbol\phi$  is equivalent to a 
weighted $H^2$-norm of $u$ and attains the same convergence order as $u$. 
\end{remark}

\section{Numerical Results}\label{sec:4}
	This section presents numerical results to validate the proposed decoupled mixed finite element method. Let $\Omega$ be the unit cube domain $(0,1)^3$. The mesh $\mathcal{T}_h$ is constructed by first dividing the unit cube into $N^3$ small cubes, with $N=4,8,16,32,64$, and then subdividing each small cube into six tetrahedra.
All the following numerical results were obtained using FreeFEM++ \cite{MR3043640}.
	
	\subsection{Numerical test without boundary layer}  We consider the fourth-order
	elliptic singular perturbation problem \eqref{equation1} with the exact solution given by
	\[
	u=\sin(\pi x)^2\sin(\pi y)^2\sin(\pi z)^2.  
	\]
	The right-hand term can be computed from equation \eqref{equation1}.
	
	Since $w$ and $\boldsymbol{p}$ cannot be obtained from the expression of $u$, we compute only the errors of $u$ and $\boldsymbol{\phi}$ here for simplicity. The corresponding numerical errors are summarized in Table 1, including $|u-u_h|_1$ and  $\|\boldsymbol{\phi}-\boldsymbol{\phi}_h\|_\varepsilon$ for various values of $\varepsilon$ and mesh size $h$. 

	As shown in Table \ref{tab:errors_convergence}, $|u-u_h|_1$ and $\|\boldsymbol{\phi}-\boldsymbol{\phi}_h\|_\varepsilon$ exhibit the first-order convergence for $\varepsilon=1$. As $\varepsilon$ decreases to 0.001 and $10^{-6}$,  the convergence rate $|u-u_h|_1$ and $\|\boldsymbol{\phi}-\boldsymbol{\phi}_h\|_\varepsilon$ remain $\mathcal{O}(h^1)$. The above results are consistent with Theorem \ref{norm} and Theorem \ref{Th:errorU}.
	\begin{table}[!htbp]\centering
		\caption{Errors$|u-u_h|_1$ and $\|\boldsymbol{\phi}-\boldsymbol{\phi}_h\|_\varepsilon$ }
		\label{tab:errors_convergence}
		\begin{tabular}{c c c c c c c c c c}
			\toprule
			$\varepsilon$ & $h$ & $|u-u_h|_1$ & $rate$ & $\|\boldsymbol{\phi}-\boldsymbol{\phi}_h\|_{\varepsilon}$ & $rate$ \\
			\midrule
			\multirow{5}{*}{$1$}
			& 4.330e-01 & 1.065e+00 & --- & 8.105e+00 & ---  \\
			& 2.165e-01 & 5.542e-01 & 0.94 & 4.591e+00 & 0.82  \\
			& 1.083e-01 & 2.622e-01 & 1.08 & 2.395e+00 & 0.94  \\
			& 5.413e-02 & 1.273e-01 & 1.04 & 1.234e+00 & 0.96  \\
			& 2.706e-02 & 6.300e-02 & 1.01 & 6.130e-01 & 1.00  \\
			\midrule
			\multirow{5}{*}{$0.001$}
			& 4.330e-01 & 8.757e-01 & --- & 7.625e-01 & ---  \\
			& 2.165e-01 & 4.857e-01 & 0.85 & 3.659e-01 & 1.06 \\
			& 1.083e-01 & 2.486e-01 & 0.97 & 1.700e-01 & 1.11  \\
			& 5.413e-02 & 1.248e-01 & 0.99 & 7.743e-02 & 1.13  \\
			& 2.706e-02 & 6.247e-02 & 1.00 & 3.145e-02 & 1.32 \\
			\midrule
			\multirow{5}{*}{$10^{-6}$}
			& 4.330e-01 & 8.754e-01 & --- & 7.628e-01 & --- \\
			& 2.165e-01 & 4.854e-01 & 0.85 & 3.677e-01 & 1.05  \\
			& 1.083e-01 & 2.484e-01 & 0.97 & 1.742e-01 & 1.08  \\
			& 5.413e-02 & 1.248e-01 & 0.99 & 8.524e-02 & 1.03  \\
			& 2.706e-02 & 6.244e-02 & 1.00 & 4.236e-02 & 1.01  \\
			\bottomrule
		\end{tabular}
	\end{table}
\subsection{Numerical test with boundary layer}  We next verify the convergence
behavior of the conforming finite element methods \eqref{stokes8} for problem \eqref{equation1} with boundary layers.

	To this end, consider the exact solution to the Poisson equation \eqref{poisson}:
	\[
	u^0=\sin(\pi x)\sin(\pi y)\sin(\pi z).
	\]
	The corresponding right-hand side $f$ computed from the Poisson equation is used in \eqref{equation1}, for which
	the exact solution $u$ is not known in closed form. When $\varepsilon$ becomes smaller, the solution $u$ develops pronounced boundary layers. From Table \ref{tab:errors_convergence2}, it can be clearly observed that the convergence orders of $\|\boldsymbol{\phi}^0-\boldsymbol{\phi}_h\|_\varepsilon$ tend to be of order $1/2$, which is consistent with Theorem \ref{conclusion} and Theorem \ref{Th:errULayer}. In contrast, the convergence orders of $|u^0-u_h|_1$ are slightly higher than $1/2$ on the tested meshes, but still remain consistent with the theoretical prediction. 
	\begin{table}[!htbp]
		\centering
		\caption{Errors $|u^0-u_h|_1$ and $\|\boldsymbol{\phi}^0-\boldsymbol{\phi}_h\|_\varepsilon$ }
		\label{tab:errors_convergence2}
		\begin{tabular}{c c c c c c c c c c}
			\toprule
		$\varepsilon$ & $h$ & $|u^0-u_h|_1$ & $rate$ & $\|\boldsymbol{\phi}^0-\boldsymbol{\phi}_h\|_{\varepsilon}$ & $rate$  \\
				\midrule
		\multirow{5}{*}{$10^{-6}$}
		& 4.330e-01 & 9.682e-01 & --- & 1.024e+00 & ---  \\
		& 2.165e-01 & 5.197e-01 & 0.90 & 6.074e-01 & 0.75  \\
		& 1.083e-01 & 2.754e-01 & 0.92 & 3.843e-01 & 0.66  \\
		& 5.413e-02 & 1.517e-01 & 0.86 & 2.558e-01 & 0.59  \\
		& 2.706e-02 & 8.832e-02 & 0.78 & 1.752e-01 & 0.55 \\
		\midrule
		\multirow{5}{*}{$10^{-8}$}
		& 4.330e-01 & 9.682e-01 & --- & 1.024e+00 & ---  \\
		& 2.165e-01 & 5.197e-01 & 0.90 & 6.074e-01 & 0.75 \\
		& 1.083e-01 & 2.754e-01 & 0.92 & 3.843e-01 & 0.66  \\
		& 5.413e-02 & 1.517e-01 & 0.86 & 2.558e-01 & 0.59  \\
		& 2.706e-02 & 8.832e-02 & 0.78 & 1.753e-01 & 0.55 \\
		\midrule
		\multirow{5}{*}{$10^{-10}$}
			& 4.330e-01 & 9.682e-01 & --- & 1.024e+00 & ---  \\
		& 2.165e-01 & 5.197e-01 & 0.90 & 6.074e-01 & 0.75 \\
		& 1.083e-01 & 2.754e-01 & 0.92 & 3.843e-01 & 0.66  \\
		& 5.413e-02 & 1.517e-01 & 0.86 & 2.558e-01 & 0.59  \\
		& 2.706e-02 & 8.832e-02 & 0.78 & 1.753e-01 & 0.55 \\
		\bottomrule
		\end{tabular}
	\end{table}

\vspace{5mm}

\section*{Acknowledgments}
The authors would like to thank anonymous referees for their valuable comments. This work is supported by the National Natural Science Foundation of China (Grant No. 12301529), Yunnan Fundamental Research Projects (No. 202401AU070214) and Yunnan Xingdian Talent Support Program - Young Talent Project.

		\bibliographystyle{abbrv}
		\bibliography{Reference}
		
\end{document}